\theoremstyle{plain}
\newtheorem{theorem}{Theorem}[section]
\newtheorem{corollary}[theorem]{Corollary}
\newtheorem{lemma}[theorem]{Lemma}
\newtheorem{proposition}[theorem]{Proposition}
\theoremstyle{definition}
\newtheorem{example}[theorem]{Example}
\newtheorem{remark}[theorem]{Remark}
\newtheorem*{remark*}{Remark}
\def\!#1{#1^\s}
\def\ker#1{\mathrm{ker}(#1)}
\def\aut#1{\mathrm{Aut}(#1)}
\def\End#1{\mathrm{End}(#1)}
\def\aff#1{\mathrm{Aff}#1}
\def\lmlt{\mathrm{LMlt}}
\def\dis{\mathrm{Dis}}
\def\what{sharp \,}
\newcommand{\BlocS}[2]{\dis(#1)_{[#2]}}
\def\s{\mathfrak{s}}
\def\comment#1{{\color{red} #1}}
\def\setof#1#2{\{#1\, : \,#2\}}
\def\Z{\mathbb Z}
\def\Q{\mathcal Q}
\def\N{\mathrm{Norm}}
\def\c#1{\mathrm{con}_{#1}}
\def\cg#1{\equiv_\alpha}
\newcommand*\xbar[1]{%
   \hbox{%
     \vbox{%
       \hrule height 0.5pt % The actual bar
       \kern0.5ex%         % Distance between bar and symbol
       \hbox{%
         \kern-0.1em%      % Shortening on the left side
         \ensuremath{#1}%
         \kern-0.1em%      % Shortening on the right side
       }%
     }%
   }%
}
\title{Two Galois connections for left quasigroups}
\author{Marco Bonatto}
\address[M. Bonatto]{Dipartimento di matematica e informatica - UNIFE}
\email{marco.bonatto.87@gmail.com}
\begin{document}

\begin{abstract}
We investigate two Galois connection between the congruence lattice and the lattice of subgroups of the displacement group of left quasigroups. Such connections were already studied for racks and quandles. We introduce the class of left quasigroups having congruence determined by subgroups (resp. orbits) and we extend a known result for quandles.
\end{abstract}

\maketitle
\section*{Introduction}

Left quasigroups are rather combinatorial objects, nevertheless several algebraic structures arising in different ares of mathematics have an underlying left quasigroup structure. For instance racks and quandles coming from low dimensional topology \cite{J, Matveev} and the study of Hopf algebras \cite{AG} are examples of highly structured left quasigroups. 

A tool for the study of racks is the {\it left multiplication group} and its subgroups. In particular the {\it displacement group} reflects a lot of properties of racks. In particular properties as {\it abelianness} and {\it nilpotency} in the sense of commutator theory \cite{Smith, comm} are completely determined by the correspondent properties of the displacement group \cite{CP}.

In \cite{CP}, a monotone Galois connection between the lattice of congruences and the lattice of normal subgroups of the left multiplication group of racks has been defined by the pair of operators $(\dis_*,\c{*})$. If this Galois connection provides an isomorphism between the two lattices we said that a rack has the {\it congruence determined by subgroups} (shortly, CDSg). We also discovered a second Galois connection defined by the pair of mappings $(\dis^*,\mathcal{O}_*)$ and later on in \cite{LTT} we introduced the same pair of operators between the lattice of congruences of left quasigroups and the lattice of {\it admissible subgroups} (in the case of racks, admissible subgroups and normal subgroup of the left multiplication group are the same thing). We say that a left quasigroup has the {\it congruence determined by orbits} (shortly, CDOs) whenever this second Galois connection provides an isomorphism between the two lattices.

In this paper we investigate the relation between these two Galois connections in the setting of left quasigroups. In particular, left quasigroups having CDSg and CDOs are compared in Theorem \ref{CDSg vs CDOs}. Some results in this direction were already obtained for racks in \cite{Principal}. One of the main results of the paper is Theorem \ref{nilpotent CDSg_0} in which we extend \cite[Proposition 3.17]{Principal} to idempotent left quasigroups of arbitrary cardinality.

In Section \ref{preliminary} we collect all the basics about left quasigroups and the interplay between congruences and admissible subgroups. Moreover we introduce the pairs of operators $(\dis^*,\mathcal{O}_*)$ and $(\dis_*, \c{*})$ and how they relate with the {\it Cayley kernel} and the property of being {\it faithful}. Section \ref{Galois} is dedicated to the Galois connections defined by the operators introduced in the previous section and to left quasigroups having CDSg and CDOs. The last section is about nilpotent left quasigroups having CDSg (we also provide some basic notions of commutator theory and central extensions).

\section{Preliminary results}\label{preliminary}

\subsection{Left quasigroups}

A left quasigroup is a binary algebraic structure $(Q,\cdot,\backslash)$  such that the following identities hold:
$$x\cdot(x\backslash y)\approx y\approx x\backslash (x\cdot y).$$
We will denote the $\cdot$ operation just by juxtaposition in the rest of the paper. The {\it left and right multiplication mappings} of $Q$ are defined as
$$L_x:y\mapsto x y,\quad R_x:y\mapsto y x$$
for every $x\in Q$. According to the axioms above, the map $L_x$ is a permutation for every $x\in Q$ and so we can define the {\it left multiplication group} of $Q$ as $\lmlt(Q)=\langle L_x,\, x\in Q\rangle$. Note that the $\backslash$ operation is defined as $x\backslash y=L_x^{-1} (y)$ for every $x,y\in Q$, so we ofter specify just the $\cdot$ operation for left quasigroups, from which it is usually easy to get the left multiplication mappings and their inverses.

{\it Right quasigroups} can be defined analogously as binary algebraic structure $(Q,\cdot, /)$ satisfying
$$(y \cdot x)/x\approx y\approx (y/x)\cdot x.$$
The right multiplication mappings of right quasigroups are bijections.

The {\it set of idempotent elements of $Q$} is $E(Q)=\setof{x\in Q}{xx=x}$. We say that $Q$ is:

%
%An equivalence relation $\alpha$ on $Q$ is a {\it congruence} if 
%$$(xy)\, \alpha\, (zu) \text{ and } (x\backslash y)\, \alpha \, (z\backslash u)$$
%provided $x\,\alpha\, z$ and $y\,\alpha\, u$. We denote the class of $x$ with respect to $\alpha$ by $[x]_\alpha$ (or simply by $[x]$ in some cases). The set of congruences of $Q$ is a lattice denoted by $Con(Q)$ with top element $1_Q=Q\times Q$ and bottom element $0_Q=\setof{(x,x)}{x\in Q}$. Congruences and morphisms are essentially the same thing. Indeed if $h:Q\longrightarrow Q'$ is a morphism then $\ker{h}=\setof{(x,y)\in Q}{h(x)=h(y)}$ is a congruence of $Q$. On the other hand the canonical map 
%$$Q\longrightarrow Q/\alpha,\quad x\mapsto [x]_\alpha$$
%is a morphism.

%A left quasigroup $Q$ is said to be:
\begin{itemize}
\item[(i)] {\it idempotent} if $Q=E(Q)$, i.e. the identity $xx\approx x$ holds in $Q$;
%\item[(ii)] {\it projection} if the identity $xy\approx y$ holds in $Q$. The projection left quasigroup with $n$ elements is denoted by $\mathcal{P}_n$; 
\item[(ii)] a {\it rack} if the identity $x(yz)\approx (xy)(xz)$ holds (or equivalently $L_x\in \aut{Q,\cdot }$ for every $x\in Q$). Idempotent racks are called {\it quandles};
\item[(iii)] {\it latin} if the right multiplications are bijective. %In this case a binary operation can be defined as $x/y=R_y^{-1}(x)$ for $x,y\in Q$. If $Q$ is infinite, the universal algebraic features (e.g. congruences, subalgebras etc) of the associated {\it quasigroup} structure $(Q,\cdot,\backslash, /)$ might be different from the one of the left quasigroup $(Q,\cdot,\backslash)$. 
\end{itemize}

Latin left quasigroups are essentially {\it reducts} of {\it quasigroups}, namely binary algebraic structures with three binary operations $(Q,\cdot, \backslash,/)$, such that $(Q,\cdot,\backslash)$ is a left quasigroup and $(Q,\cdot, /)$ is a right quasigroup. The two type of structures have different signatures effecting congruences and subalgebras if the underlying set has infinite cardinality. In this paper we consider latin left quasigroups with signature $\{\cdot ,\backslash\}$.

A {\it term} in the language of left quasigroups $\{\cdot, \backslash\}$ is either a variable or an expression $t_1\cdot t_{2}$, $t_1\backslash t_2$, where $t_1, t_2$ are terms. A {\it Malt'sev} term is a ternary term $m$ such that $m(x,y,y)\approx x\approx m(y,y,x)$ holds. We say that a left quasigroup $Q$ is Malt'sev if the variety generated by $Q$ has a Malt'sev term. Given a left quasigroup $(Q,\cdot,\backslash)$ any $n$-ary term $t$ provides a map $t^Q:Q^n\longrightarrow Q$ called {\it term operation}. 

For further details on the universal algebraic definitions of terms and varieties of algebras see \cite{UA}.

\subsection{Congruences and admissible subgroups}
In this section we collect all the results on congruences and admissible subgroups we are using in the following. For further details see \cite{LTT, nilpotentleft}.

Let $Q$ be a set, we denote the lattice of equivalence relations on $Q$ as $Equiv(Q)$. Given $\alpha\in Equiv(Q)$ we denote the quotient set with respect to $\alpha$ by $Q/\alpha$ and the class of $x$ (with respect to $\alpha$) by $[x]_\alpha$ (we often omit the subscrit and we write just $[x]$).

Let $(Q,\cdot,\backslash)$ be a left quasigroup. We can define the following operators associating to every equivalence a
pair of subgroups of $\lmlt(Q)$:
\begin{align*}
\dis_*:\alpha\mapsto\dis_\alpha &=\langle h L_x L_y^{-1} h^{-1},\, x\,\alpha\, y,\, h\in \lmlt(Q)\rangle,\\
\dis^*:\alpha\mapsto\dis^\alpha &=\setof{ h \in \lmlt(Q)}{h(x)\,\alpha\, x\, \text{ for all } x\in Q}.
\end{align*}
The two operators $\dis_*,\dis^*$ are clearly monotone.

The first of the two groups is called the {\it displacement group relative to $\alpha$} and it is normal in $\lmlt(Q)$ by definition. In particular, we denote $\dis_{1_Q}$ just by $\dis(Q)$ and we call it the {\it displacement group} of $Q$.  According to \cite[Lemma 1.4]{LTT}, we have 
\begin{align*}
\lmlt(Q)&=\dis(Q)\langle L_x\rangle\, \text{ for every }\, x\in Q,\\
\dis(Q)&=\setof{L_{x_1}^{k_1}\ldots L_{x_n}^{k_n}}{x_1,\ldots x_n\in Q,\, n\in \mathbb{N},\, k_i\in \mathbb{Z}, \, \sum_j k_j=0}.
\end{align*} 

A congruence on $Q$ is an equivalence relation $\alpha$ such that $x y \,\alpha \, z t$ and $x \backslash y \,\alpha \, z \backslash t$ provided $x\, \alpha \, z$ and $y\, \alpha\, t$. We denote the lattice of congruences of $Q$ as $Con(Q)$. Given $\alpha\in Con(Q)$, the quotient set $Q/\alpha$ is endowed with a well-defined left quasigroup structure defined by
$$[x]_\alpha \cdot [y]_\alpha=[x\cdot y]_\alpha,\, [x]_\alpha \backslash [y]_\alpha=[x\backslash y]_\alpha$$
for every $[x]_\alpha, [y]_\alpha\in Q/\alpha$. The canonical map $x\mapsto [x]_\alpha$ provides a surjective morphism of left quasigroups from $Q$ to $Q/\alpha$. On the other hand, given a surjective morphism of left quasigroups $f:Q\longrightarrow Q'$, the relation $\ker{f}=\setof{(x,y)}{f(x)=f(y)}$ is a congruence of $Q$ and $Q'\cong Q/\ker{f}$. Note also that if $\alpha$ is a congruence of $Q$ and $[x]_\alpha\in E(Q/\alpha)$ then the block of $x$ with respect to $\alpha$ is a subalgebra of $Q$. 

 Let $N\leq \lmlt(Q)$. We can define two equivalence relation out of $N$, namely:
\begin{align*} 
 x\, \mathcal{O}_N\, y\, & \text{if and only if } x=h(y)\, \text{ for some } h\in N,\\
 x\, \c{N}\, y &\text{ if and only if } L_x L_y^{-1} \in N.
\end{align*}
The assignments above define two monotone operators $\c{*}:N\mapsto \c{N}$ and $\mathcal{O}_*:N\mapsto \mathcal{O}_N$ from the lattice of subgroups of $\lmlt(Q)$ to $Equiv(Q)$.

If $\mathcal{O}_N=1_Q$ (i.e. $N$ is transitive on $Q$), we say that $Q$ is {\it connected by $N$}. If $\lmlt(Q)$ is transitive we simply say that $Q$ is {\it connected}. If all the subalgebras of $Q$ are connected we say that $Q$ is {\it superconnected} (in particular $Q$ is connected). The class of superconnected left quasigroups has been studied in \cite{Maltsev_paper} and \cite{Super}. Connected idempotent left quasigroups and left quasigroup with a Mal'cev term are connected by their displacement group \cite[Proposition 3.6]{Maltsev_paper}.

%\comment{If the classes of $\alpha$ are blocks with respect to the action of $\lmlt(Q)$ we say that $\alpha$ is an invariant equivalence relation and then also $\dis^\alpha$ is normal e.g. if $\alpha\in Con(Q)$.} 

Assume that $\alpha\in Con(Q)$, then we have a morphism of groups defined as:
\begin{equation}\label{pi_map}
\pi_\alpha:\dis(Q)\longrightarrow \dis(Q/\alpha),\quad L_{x_1}^{k_1}\ldots L_{x_n}^{k_n} \longrightarrow L_{[x_1]}^{k_1}\ldots L_{[x_n]}^{k_n}.
\end{equation}

Let us denote by $\dis(Q)_x$ the pointwise stabilizer of $x$ in $\dis(Q)$. If $\dis(Q)_x=1$ for every $x\in Q$ we say that $Q$ is {\it semiregular}. In particular, we have
\begin{align}
\dis_\alpha\leq \dis^\alpha=\ker{\pi_\alpha}\leq \dis(Q)_{[x]}=\pi_\alpha^{-1}(\dis(Q/\alpha)_{[x]})=\setof{h\in \dis(Q)}{h(x)\, \alpha\, x}
\end{align}
for every $x\in Q$. Moreover, we have the following.
\begin{proposition}\label{pi}	\cite[Proposition 3.2]{CP}
Let $(Q,\cdot,\backslash)$ be a left quasigroup and $\alpha,\beta\in Con(Q)$ such that $\alpha\leq \beta$. Then: 
\begin{itemize}
\item[(i)] $\dis_{\beta/\alpha}=\pi_{\alpha}(\dis_\beta)$. 
\item[(ii)] $\dis^{\beta/\alpha}=\pi_{\alpha}(\dis^\beta).$%,\quad \lmlt^{\beta/\alpha}=\pi_{\alpha}(\lmlt^\beta).$$
\end{itemize}
\end{proposition}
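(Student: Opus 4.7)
The plan is to work throughout with the natural extension of $\pi_\alpha$ to a surjective group morphism $\pi_\alpha : \lmlt(Q) \to \lmlt(Q/\alpha)$, $L_x \mapsto L_{[x]_\alpha}$. This is well-defined because $\alpha$ being a congruence forces every $L_x$ and $L_x^{-1}$ to permute $\alpha$-classes, and its restriction to $\dis(Q)$ agrees with the morphism in \eqref{pi_map}; surjectivity onto $\lmlt(Q/\alpha)$ is immediate on generators. I also use throughout that $\alpha\leq\beta$ makes the quotient congruence $\beta/\alpha$ on $Q/\alpha$ well-defined by $[x]\,(\beta/\alpha)\,[y]\Leftrightarrow x\,\beta\,y$.

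For item (i) I simply chase the normal generating set. A typical generator of $\dis_\beta$ is $hL_xL_y^{-1}h^{-1}$ with $h\in\lmlt(Q)$ and $x\,\beta\,y$; its image under $\pi_\alpha$ equals $\pi_\alpha(h)\,L_{[x]}L_{[y]}^{-1}\pi_\alpha(h)^{-1}$, and $[x]\,(\beta/\alpha)\,[y]$, so this image lies in $\dis_{\beta/\alpha}$, proving $\pi_\alpha(\dis_\beta)\subseteq\dis_{\beta/\alpha}$. The reverse inclusion uses surjectivity of $\pi_\alpha$ on $\lmlt$: any generator $h'L_{[x]}L_{[y]}^{-1}h'^{-1}$ of $\dis_{\beta/\alpha}$ lifts, by choosing $h\in\lmlt(Q)$ with $\pi_\alpha(h)=h'$ and representatives $x,y\in Q$ with $x\,\beta\,y$ (available by definition of $\beta/\alpha$), to the element $hL_xL_y^{-1}h^{-1}\in\dis_\beta$ whose image is the desired generator.

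For item (ii) the inclusion $\pi_\alpha(\dis^\beta)\subseteq\dis^{\beta/\alpha}$ is direct: if $h\in\dis^\beta$, then for every $x\in Q$ one has $\pi_\alpha(h)([x])=[h(x)]\,(\beta/\alpha)\,[x]$, hence $\pi_\alpha(h)\in\dis^{\beta/\alpha}$. For the reverse inclusion, take $g\in\dis^{\beta/\alpha}$ and lift it to $h\in\lmlt(Q)$ with $\pi_\alpha(h)=g$ using surjectivity. The hypothesis $g([x])\,(\beta/\alpha)\,[x]$ for every $[x]\in Q/\alpha$ unwinds, by the definition of the quotient congruence, to $h(x)\,\beta\,x$ for every $x\in Q$, so $h\in\dis^\beta$ and $g=\pi_\alpha(h)\in\pi_\alpha(\dis^\beta)$.

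I do not anticipate a genuine obstacle: both parts reduce to tracking generators (resp.\ pointwise conditions) through a surjective morphism, using the equivalence $x\,\beta\,y\Leftrightarrow [x]\,(\beta/\alpha)\,[y]$ guaranteed by $\alpha\leq\beta$. The only step worth a line of justification is the well-definedness and surjectivity of the extension of $\pi_\alpha$ to $\lmlt(Q)$, which is forced by $\alpha$ being a congruence together with the decomposition $\lmlt(Q)=\dis(Q)\langle L_x\rangle$ recalled in the preliminaries.
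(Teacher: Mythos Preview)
Your argument is correct. The paper does not supply its own proof here---it simply cites \cite[Proposition 3.2]{CP}---and your generator chase for (i) together with the pointwise verification for (ii) is exactly the standard route one would expect. One small point of hygiene in (ii): throughout the paper $\dis^{\beta}$ is treated as a subgroup of $\dis(Q)$ (cf.\ the identity $\dis^\alpha=\ker{\pi_\alpha}$ and the fact that $\dis^\alpha\in\N'(Q)$), so when you lift $g\in\dis^{\beta/\alpha}\leq\dis(Q/\alpha)$ you should take the preimage inside $\dis(Q)$ rather than in $\lmlt(Q)$; this is harmless, since $\pi_\alpha$ restricted to $\dis(Q)$ already surjects onto $\dis(Q/\alpha)$.
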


We focus on the set of {\it admissible subgroups} contained in the displacement group and its interplay with congruences. The admissible subgroups we are interest in are defined in \cite{LTT} as
\begin{align*}
\N'(Q)&=\setof{N\trianglelefteq \lmlt(Q)}{N\leq \dis(Q),\, \mathcal{O}_N\leq \c{N}}\\
&=\setof{N\trianglelefteq \lmlt(Q)}{\dis_{\mathcal{O}_N}\leq N\leq \dis(Q)}.
\end{align*}
In particular $\dis_\alpha,\dis^\alpha\in \N'(Q)$ for every $\alpha\in Con(Q)$ \cite[Corollary 1.9]{LTT}. If $N\in \N(Q)$ then $\mathcal{O}_N\in Con(Q)$ and $N\leq \dis^{\mathcal{O}_N}$ \cite[Corollary 1.9]{LTT}. It is easy to check that 
$$\mathcal{O}_{\dis_\alpha}\leq \mathcal{O}_{\dis^\alpha}\leq   \alpha\leq \c{\dis_\alpha}\leq \c{\dis^\alpha}$$
for every congruence $\alpha$. Given $N\in \N'(Q)$, the relation $\c{N}$ does not need to be a congruence (this is the case for {\it semimedial} and {\it spelling} left quasigroups including racks, see \cite[Theorem 3.5, Theorem 5.6]{LTT}).

The set of admissible subgroups is a sublattice of the lattice of normal subgroups of the left multiplication group (see \cite[Lemma 1.7]{LTT}), and is stable under the usual correspondence between normal subgroups. 
\begin{corollary}\label{iso of lattice 2}\cite[Proposition 2.4]{nilpotentleft}
Let $(Q,\cdot,\backslash)$ be a left quasigroup and $\alpha$ be a congruence of $Q$. Then the mappings 
\begin{align*}
\setof{N\in \N'(Q)}{\dis^\alpha\leq N} &\longleftrightarrow \N'(Q/\alpha)\\
N&\mapsto \pi_\alpha(N)\\
\pi_\alpha^{-1}(K)&\leftmapsto K
\end{align*}
provides an isomorphism of lattices.
\end{corollary}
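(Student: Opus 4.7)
The plan is to reduce to the classical correspondence theorem for groups and then to verify that the admissibility property defining $\N'$ passes through the resulting bijection in both directions.

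The first step is to extend $\pi_\alpha$ to a surjective group homomorphism $\tilde{\pi}_\alpha : \lmlt(Q)\to\lmlt(Q/\alpha)$. Since $\alpha$ is a congruence, each $L_x$ descends to $L_{[x]}$ on $Q/\alpha$, so the assignment is well-defined on generators and visibly surjective. Directly from the definition of $\dis^\alpha$ one reads off $\ker{\tilde{\pi}_\alpha}=\dis^\alpha$. The classical correspondence theorem then produces a lattice isomorphism between normal subgroups of $\lmlt(Q)$ containing $\dis^\alpha$ and normal subgroups of $\lmlt(Q/\alpha)$, via $N\mapsto\tilde{\pi}_\alpha(N)$ and $K\mapsto\tilde{\pi}_\alpha^{-1}(K)$. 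Restricting to subgroups sandwiched between $\dis^\alpha$ and $\dis(Q)$ (and using $\tilde{\pi}_\alpha(\dis(Q))=\dis(Q/\alpha)$) narrows this to the ambient lattices appearing in the statement.

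The heart of the argument is checking that the $\N'$-condition is preserved. Given $N\in\N'(Q)$ with $\dis^\alpha\leq N$, I would set $\beta=\mathcal{O}_N$, a congruence by \cite[Corollary 1.9]{LTT}. The inclusion $\dis^\alpha\leq N$ forces every $N$-orbit to be a union of $\alpha$-classes, so $\alpha\leq\beta$, and a short computation on orbits of $\tilde{\pi}_\alpha(N)$ yields $\mathcal{O}_{\tilde{\pi}_\alpha(N)}=\beta/\alpha$. Proposition \ref{pi}(i) then gives $\dis_{\beta/\alpha}=\pi_\alpha(\dis_\beta)\leq\tilde{\pi}_\alpha(N)$, because $\dis_\beta=\dis_{\mathcal{O}_N}\leq N$ by hypothesis; hence $\tilde{\pi}_\alpha(N)\in\N'(Q/\alpha)$. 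For the reverse direction, given $K\in\N'(Q/\alpha)$ and $M=\tilde{\pi}_\alpha^{-1}(K)$, setting $\gamma=\mathcal{O}_M$ the same identification yields $\gamma/\alpha=\mathcal{O}_K$, and Proposition \ref{pi}(i) together with $\dis_{\mathcal{O}_K}\leq K$ forces $\dis_\gamma\leq M$. The lattice structure is then automatic since $\N'(Q)$ and $\N'(Q/\alpha)$ are sublattices of the ambient normal subgroup lattices by \cite[Lemma 1.7]{LTT}.

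The step I expect to be most delicate is the orbit identification $\mathcal{O}_{\tilde{\pi}_\alpha(N)}=\mathcal{O}_N/\alpha$, since this is precisely where the hypothesis $\dis^\alpha\leq N$ is used: without the ensuing $\alpha\leq\mathcal{O}_N$ only one inclusion survives, and the $\N'$-condition fails to transfer in a controlled way. Everything else is either a direct invocation of Proposition \ref{pi} or a routine application of the classical correspondence theorem.
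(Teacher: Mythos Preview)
The paper does not supply its own proof of this corollary (it is quoted from \cite[Proposition 2.4]{nilpotentleft}), so there is nothing to compare against; I therefore only assess correctness.

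Your overall strategy is sound, but there is a genuine gap in the verification that admissibility transfers forward. You claim that $\dis^\alpha\leq N$ ``forces every $N$-orbit to be a union of $\alpha$-classes, so $\alpha\leq\beta$'' where $\beta=\mathcal{O}_N$. This is false. The hypothesis $\dis^\alpha\leq N$ only gives $\mathcal{O}_{\dis^\alpha}\leq\mathcal{O}_N$, and the paper records explicitly that $\mathcal{O}_{\dis^\alpha}\leq\alpha$, not the reverse. Concretely, take $N=\dis^\alpha$ itself: then $\alpha\leq\mathcal{O}_N$ would mean $\alpha=\mathcal{O}_{\dis^\alpha}$, which is exactly the ``\what'' condition of Lemma~\ref{sharp} and fails in general (e.g.\ in any projection left quasigroup with $|Q|>1$). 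Consequently the quotient $\beta/\alpha$ need not be defined, the orbit identification $\mathcal{O}_{\tilde\pi_\alpha(N)}=\beta/\alpha$ is not available, and your appeal to Proposition~\ref{pi}(i) is unjustified. The same issue recurs verbatim in your reverse direction with $\gamma=\mathcal{O}_M$.

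The repair is short and stays within your framework. For the forward direction, the correct orbit relation is $\mathcal{O}_{\tilde\pi_\alpha(N)}=(\alpha\vee\mathcal{O}_N)/\alpha$; since $\dis_\alpha\leq\dis^\alpha\leq N$ and $\dis_{\mathcal{O}_N}\leq N$, one gets $\dis_{\alpha\vee\mathcal{O}_N}=\langle\dis_\alpha,\dis_{\mathcal{O}_N}\rangle\leq N$, and then Proposition~\ref{pi}(i) applies to $\alpha\vee\mathcal{O}_N$. Alternatively, argue elementwise: if $[x]\,\mathcal{O}_{\tilde\pi_\alpha(N)}\,[y]$ then $h(x)\,\alpha\,y$ for some $h\in N$, whence $L_{h(x)}L_y^{-1}\in\dis_\alpha\leq N$ and $L_xL_{h(x)}^{-1}\in\dis_{\mathcal{O}_N}\leq N$, so $L_xL_y^{-1}\in N$ and $L_{[x]}L_{[y]}^{-1}\in\tilde\pi_\alpha(N)$. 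The reverse direction is handled the same way (and in fact works directly: $x\,\mathcal{O}_M\,y$ implies $[x]\,\mathcal{O}_K\,[y]$, hence $L_{[x]}L_{[y]}^{-1}\in\dis_{\mathcal{O}_K}\leq K$, so $L_xL_y^{-1}\in M$).
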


The following lemma shows the interplay between the maps $\pi_{\alpha}$ as defined in \eqref{pi_map} and the operators $\mathcal{O}_*$ and $\c{*}$.
\begin{lemma}\label{image of orbit congruences}
Let $(Q,\cdot, \backslash)$ be a left quasigroup, $\alpha\in Con(Q)$, $\beta=\mathcal{O}_{\dis^\alpha}$ and $N\in \N'(Q)$. If $\dis^\alpha\leq N$ then:
\begin{itemize}
\item[(i)] $\mathcal{O}_N/\beta=\mathcal{O}_{\pi_\beta(N)}$.
\item[(ii)]  $\c{N}/\alpha=\c{\pi_\alpha(N)}$.
\end{itemize}
\end{lemma}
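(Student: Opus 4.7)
The plan is to prove both identities by double containment, using only the kernel identity $\dis^\gamma = \ker{\pi_\gamma}$ recorded in the displayed equation preceding Proposition~\ref{pi}, together with the hypothesis $\dis^\alpha \leq N$.

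First I would check that the quotient notations on the left-hand sides make sense: monotonicity of $\mathcal{O}_*$ applied to $\dis^\alpha \leq N$ gives $\beta = \mathcal{O}_{\dis^\alpha} \leq \mathcal{O}_N$, while the chain $\alpha \leq \c{\dis^\alpha}$ shown just above the statement, combined with monotonicity of $\c{*}$, gives $\alpha \leq \c{N}$. So both quotient equivalences on $Q/\beta$ and $Q/\alpha$ are well defined.

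For (i), the forward inclusion is immediate: if $h \in N$ satisfies $h(y)=x$, then $\pi_\beta(h) \in \pi_\beta(N)$ sends $[y]_\beta$ to $[x]_\beta$. For the converse, I would pick $h \in N$ with $\pi_\beta(h)([y]_\beta)=[x]_\beta$; then $h(y)\,\beta\,x$, and since $\beta = \mathcal{O}_{\dis^\alpha}$ there exists $g \in \dis^\alpha$ with $x = g(h(y))$. Because $\dis^\alpha \leq N$, the product $gh$ lies in $N$ and realises $x=(gh)(y)$, witnessing $x\,\mathcal{O}_N\,y$. Part (ii) follows the same pattern: the forward inclusion is the computation $\pi_\alpha(L_xL_y^{-1}) = L_{[x]}L_{[y]}^{-1}$; for the converse, I would pick $h \in N$ with $\pi_\alpha(h)=\pi_\alpha(L_xL_y^{-1})$, so that $L_xL_y^{-1}h^{-1}$ lies in $\ker{\pi_\alpha} = \dis^\alpha \leq N$, and therefore $L_xL_y^{-1} \in N$, i.e.\ $x\,\c{N}\,y$.

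The only substantive obstacle is the lifting step in the converse of (i): a preimage $h \in N$ produced from $\pi_\beta(N)$ need not itself send $y$ to $x$ but only to some element $\beta$-related to $x$, and this discrepancy must be absorbed inside $N$. The hypotheses $\beta = \mathcal{O}_{\dis^\alpha}$ and $\dis^\alpha \leq N$ are precisely what permit the correction, and this is the structural content of the lemma; the rest is routine group-theoretic bookkeeping inherited from the fact that $\pi_\alpha$ and $\pi_\beta$ are group homomorphisms carrying left translations to left translations.
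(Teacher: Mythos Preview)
Your proof is correct and follows essentially the same route as the paper's own argument: both check well-definedness via monotonicity, obtain the forward inclusions directly from the definition of $\pi_\gamma$, and handle the converse by lifting and then correcting inside $N$ using $\dis^\alpha\leq N$ (for~(i) via an element of $\dis^\alpha$ realising the $\beta$-relation, for~(ii) via the kernel identity $\ker{\pi_\alpha}=\dis^\alpha$). The only difference is cosmetic: the paper compresses your converse of~(ii) into the single line $L_xL_y^{-1}\in N\dis^\alpha=N$, whereas you spell out the same computation by naming a preimage $h\in N$ explicitly.
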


\begin{proof}
Since $\dis^\alpha\leq N$ we have that $\beta\leq \mathcal{O}_N$ and $\alpha\leq \c{\dis^\alpha}\leq \c{N}$.

(i) By definition $[x]_\beta \, \mathcal{O}_{N}/\beta \, [y]_\beta$ if and only if $x=h(y)$ for some $h\in N$. Thus, if $x=h(y)$ it follows that $[x]_\beta=[h(y)]_\beta=\pi_\beta(h)([y]_\beta)$ and so $[x]_\beta \,\mathcal{O}_{N}/\beta \, [y]_\beta$. On the other hand if $[x]_\beta=\pi_\beta(h)([y]_\beta)=[h(y)]_\beta$ then $x\, \beta\, h(y)$, i.e. $x=kh(y)$ for some $k\in \dis^\alpha$. Since $\dis^\alpha\leq N$ we have that $hk\in N$ and thus $x \, \mathcal{O}_N y$.
%\end{proof}
%
%
%\begin{lemma}\label{con on factors}
%Let $Q$ be a left quasigroup, $\alpha\in Con(Q)$ and let $\dis^\alpha\leq N\in \N'(Q)$. Then $\c{N}/\alpha=\c{\pi_\alpha(N)}$.
%\end{lemma}
%
%\begin{proof}

(ii) If $x\, \c{N}\, y$ then $\pi_\alpha(L_x L_y^{-1})=L_{[x]_\alpha} L_{[y]_\alpha}^{-1}\in \pi_\alpha(N)$. On the other hand, if $[x]_\alpha\, \c{\pi_{\alpha}(N)}\, [y]_\alpha$ then $L_x L_y^{-1}\in N\dis^\alpha=N$.
\end{proof}
 
%
%
%We say that a left quasigroup $Q$ is {\it semiregular} if $\dis(Q)$ is semiregular on $Q$.
%
%
%
%\begin{lemma}\label{semiregular idempotent are quandles}\cite{nilpotentleft}
%Semiregular idempotent left quasigroups are quandles.
%%Let $Q$ be a idempotent left quasigroup. If $Q$ is semiregular then $Q$ is a quandle.
%\end{lemma}

\subsection{The Cayley kernel}
%\textbf{}
%
%\comment{Add $\sigma_Q$ and stuff on semiregular?}

%\subsection{Faithful and superfaithful left quasigroups}
% Let $Q$ be a left quasigroup and let $\P$ be the relation defined as
%%\begin{equation}\label{relation P}
%%a \, \P \, b \quad \text{ if and only if } \{a,b\}=\mathcal{P}_2.
%%\end{equation}
%\begin{equation}\label{relation P}
%a \, \P \, b \quad \text{ if and only if } \quad Sg(a,b) \text{ is a projection quandle}.
%\end{equation}
% The relation $\P$ is symmetric and reflexive, but in general is not an equivalence. Let $[a]_\P=\setof{b\in Q}{a\, \P\, b}$. %\comment{change the notation to $\{a\}_\P$?}. 
% Projection subalgebras of $Q$ containing $a\in Q$ are contained in $[a]_\P$, $ [a]_{\P}\subseteq Fix(L_a)$ and if $Q$ is idempotent $[a]_{\lambda_Q}\subseteq [a]_{\P}$ for every $a\in Q$.
% 
%\comment{Not really using the relation $\P$}

The {\it Cayley kernel} of a left quasigroup $Q$ is the equivalence relation $\lambda_Q=\c{1}$, i.e. given $x,y\in Q$ then
$$x\, \lambda_Q\, y\, \text{ if and only if } L_x=L_y.$$
In general, the Cayley kernel is not a congruence. If this is the case, we say that $Q$ is a {\it Cayley left quasigroup} (e.g. racks are Cayley left quasigroups).

\begin{remark}\label{remark on lambda}\cite[Remark 2.7]{nilpotentleft} 
Let $(Q,\cdot, \backslash)$ be a left quasigroup and $\alpha\in Con(Q)$:
\begin{itemize}
\item[(i)] $\alpha\leq \lambda_Q$ if and only if $\dis_\alpha=1$;
%\item[(ii)] If $x\,\lambda_Q\, y$ then $[x]_\alpha\,\lambda_{Q/\alpha}\, [y]_\alpha$. %Indeed: if $L_x=L_y$, then $L_{[x]_\alpha}=\pi_\alpha(L_x)=\pi_\alpha(L_y)= L_{[y]_\alpha}$.
\item[(ii)] $\lambda_{Q/\alpha}=\c{\dis^\alpha}/\alpha$. %According to  \cite[Proposition 1.6]{LTT}, 
\end{itemize}
\end{remark}
In \cite{covering_paper} we show that {\it strongly abelian congruences} of left quasigroups in the sense of \cite{TCT} are those below the Cayley kernel. In particular, congruence arising from orbits are related to strongly abelian congruences.

%
%\begin{lemma}\label{lemma below lambda}
%Let $Q$ be a left quasigroup, $\alpha\in Con(Q)$, $\beta=\mathcal{O}_{\lmlt^\alpha}$, $\gamma=\mathcal{O}_{\dis^\alpha}$ and $\delta=\mathcal{O}_{\dis_\alpha}$. Then:
%\begin{itemize}
%
%
%\item[(i)] $\lmlt^\alpha=\lmlt^\beta$, $\dis^\alpha=\dis^\beta$ and $\alpha/\beta\leq \lambda_{Q/\beta}$.
%
%\item[(ii)] $\dis^\alpha=\dis^\gamma$ and $\alpha/\gamma\leq \lambda_{Q/\gamma}$
%
%\item[(iii)] $\alpha/\delta\leq \lambda_{Q/\delta}$.
%\end{itemize}
%\end{lemma}
%\comment{$\dis^\alpha=\dis^\beta$ follows from the Galois connection. Remover (i)?}

\begin{lemma}\label{lemma below lambda}\cite[Lemma 2.9]{nilpotentleft}
Let $(Q,\cdot, \backslash)$ be a left quasigroup, $\alpha\in Con(Q)$, $\beta=\mathcal{O}_{\dis^\alpha}$ and $\gamma=\mathcal{O}_{\dis_\alpha}$. Then $\alpha/\beta\leq \lambda_{Q/\beta}$ and $\alpha/\gamma\leq \lambda_{Q/\gamma}$.
\end{lemma}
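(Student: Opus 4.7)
The plan is to reduce both inequalities to the criterion from Remark \ref{remark on lambda}(i): a congruence on a left quasigroup lies below the Cayley kernel if and only if the associated relative displacement group is trivial. Applied to the quotient left quasigroups $Q/\beta$ and $Q/\gamma$, it suffices to verify that $\dis_{\alpha/\beta}=1$ in $Q/\beta$ and $\dis_{\alpha/\gamma}=1$ in $Q/\gamma$.

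To set this up, I first record that the displayed chain $\mathcal{O}_{\dis_\alpha}\leq \mathcal{O}_{\dis^\alpha}\leq \alpha$ from Section \ref{preliminary} gives both $\beta\leq \alpha$ and $\gamma\leq \alpha$, so the quotient congruences $\alpha/\beta$ and $\alpha/\gamma$ are well defined. Proposition \ref{pi}(i), applied with $\beta$ (resp.\ $\gamma$) as the smaller congruence and $\alpha$ as the larger one, then yields
\[
\dis_{\alpha/\beta}=\pi_\beta(\dis_\alpha),\qquad \dis_{\alpha/\gamma}=\pi_\gamma(\dis_\alpha).
\]
Since $\ker{\pi_\delta}=\dis^\delta$ for any congruence $\delta$ (noted just before Proposition \ref{pi}), the task reduces to establishing the two inclusions $\dis_\alpha\leq \dis^\beta$ and $\dis_\alpha\leq \dis^\gamma$.

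The inclusion $\dis_\alpha\leq \dis^\gamma$ is immediate from $\gamma=\mathcal{O}_{\dis_\alpha}$: any $h\in \dis_\alpha$ sends each $x\in Q$ to an element in its $\dis_\alpha$-orbit, i.e.\ $h(x)\,\gamma\,x$, so $h\in \dis^\gamma$. For the other inclusion I would route through $\dis^\alpha$: the same orbit argument applied to $\beta=\mathcal{O}_{\dis^\alpha}$ gives $\dis^\alpha\leq \dis^\beta$, and composing with the standard containment $\dis_\alpha\leq \dis^\alpha$ delivers $\dis_\alpha\leq \dis^\beta$.

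There is no real obstacle; the proof is essentially a piece of bookkeeping with the operators $\dis_*$, $\dis^*$, $\mathcal{O}_*$ and the homomorphism $\pi_\alpha$. The only point to watch is that $\beta,\gamma\leq \alpha$, so that Proposition \ref{pi}(i) can be invoked in the direction used above.
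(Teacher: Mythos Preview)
Your proof is correct. Note that the paper itself does not prove this lemma; it is quoted from \cite[Lemma 2.9]{nilpotentleft} without argument, so there is no in-paper proof to compare against. Your reduction via Remark \ref{remark on lambda}(i) and Proposition \ref{pi}(i) to the containments $\dis_\alpha\leq \dis^\beta$ and $\dis_\alpha\leq \dis^\gamma$, together with the orbit observation $N\leq \dis^{\mathcal{O}_N}$, is the natural route and is complete as written.
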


%
%\begin{lemma}
%Let $Q$ a left quasigroup, $\setof{\alpha_i}{i\in I}\subseteq Con(Q)$, $\beta=\wedge_{i\in I}\alpha_i$. If $Q/\alpha_i$ is faithful for every $i\in I$ then $Q/\beta$ is faithful.
%%
%% and $\gamma=\vee_{i\in I}\alpha_i$. 
%%\begin{itemize}
%%\item[(i)] If $Q/\alpha_i$ is faithful for every $i\in I$ then $Q/\beta$ is faithful.
%%\item[(ii)] If $\alpha_i\leq \lambda_Q$ for every $i\in I$ then $\gamma\leq \lambda_Q$.
%%\end{itemize}
%\end{lemma}

%
%\begin{proof}
%%(i) 
%Need to prove that $\beta=\c{\dis^\beta}$. Assume that $L_x L_y^{-1}\in \dis^\beta=\bigcap_{i\in I} \dis^{\alpha_i}$. Therefore $x\, \c{\dis^{\alpha_i}} \, y$ and so $x\, \alpha_i\, y$ for every $i\in I$.
%%
%%
%%(ii) Assume that $\alpha_i\leq \lambda_Q$ for every $i\in I$. Then $\dis_{\alpha_i}=1$ for every $i\in I$ and according to Proposition \ref{p:dis_alpha1}(ii) $\dis_{\beta}=1$, namely $\beta\leq \lambda_Q$.
%\end{proof}

A left quasigroup $Q$ is said to be {\it \what} if given $\alpha,\beta\in Con(Q)$, $\alpha\leq\beta$ and $\beta/\alpha\leq \lambda_{Q/\alpha}$ then $\alpha=\beta$. The property of being \what has also an universal algebraic interpretation: indeed a left quasigroup $Q$ is \what if and only if every homomorphic image of $Q$ omit strongly abelian congruences.

\begin{corollary}\label{staB_under_H}
 The class of \what left quasigroups is closed under homorphic images.
%Let $Q$ be a left quasigroup and $\alpha\in Con(Q)$. If $Q$ is \what then $Q/\alpha$ is \what.
\end{corollary}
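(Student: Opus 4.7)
The plan is to prove this directly from the definition via the correspondence theorem for congruences. Fix a sharp left quasigroup $Q$ and a congruence $\gamma\in Con(Q)$; I want to show that $Q/\gamma$ is sharp. To this end, take $\alpha',\beta'\in Con(Q/\gamma)$ with $\alpha'\leq \beta'$ and $\beta'/\alpha'\leq \lambda_{(Q/\gamma)/\alpha'}$, and aim to conclude $\alpha'=\beta'$.

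By the standard correspondence between congruences of $Q/\gamma$ and congruences of $Q$ above $\gamma$, there exist $\alpha,\beta\in Con(Q)$ with $\gamma\leq \alpha\leq \beta$, $\alpha'=\alpha/\gamma$ and $\beta'=\beta/\gamma$. The third isomorphism theorem provides a left quasigroup isomorphism
$$\varphi:(Q/\gamma)/\alpha'\longrightarrow Q/\alpha,\qquad [[x]_\gamma]_{\alpha'}\longmapsto [x]_\alpha,$$
and under $\varphi$ the congruence $\beta'/\alpha'$ corresponds to $\beta/\alpha$. Because $\varphi$ is a left quasigroup isomorphism, it conjugates the left multiplication map $L_{[[x]_\gamma]_{\alpha'}}$ to $L_{[x]_\alpha}$, so it also carries the Cayley kernel $\lambda_{(Q/\gamma)/\alpha'}$ onto $\lambda_{Q/\alpha}$. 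The hypothesis $\beta'/\alpha'\leq \lambda_{(Q/\gamma)/\alpha'}$ therefore translates into $\beta/\alpha\leq \lambda_{Q/\alpha}$.

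Applying the sharpness assumption on $Q$ to the pair $\alpha\leq \beta$ gives $\alpha=\beta$, and consequently $\alpha'=\alpha/\gamma=\beta/\gamma=\beta'$, as required. The only step demanding any care is checking that $\varphi$ respects the Cayley kernel, but this is immediate from the fact that $\lambda$ is defined entirely in terms of the left multiplication maps, which are preserved by every left quasigroup isomorphism; so no real obstacle arises and the proof is essentially a bookkeeping exercise on the correspondence theorem.
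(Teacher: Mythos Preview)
Your proof is correct. The paper itself does not spell out an argument for this corollary; it simply notes just above the statement that being \what is equivalent to the condition that every homomorphic image of $Q$ omits strongly abelian congruences, from which closure under homomorphic images is immediate (a quotient of a quotient is a quotient). Your argument unwinds this directly from the definition via the correspondence theorem and the third isomorphism theorem, avoiding the external reference to strongly abelian congruences. The underlying idea is the same---both routes reduce to the fact that $(Q/\gamma)/(\alpha/\gamma)\cong Q/\alpha$---but your version is more self-contained within the paper's own definitions.
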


Let us show a characterization of \what left quasigroups in terms of the operators $\mathcal{O}_*$, $\dis_*$ and $\dis^*$.

\begin{lemma}\label{sharp}
Let $(Q,\cdot, \backslash)$ be a left quasigroup. The following are equivalent:
\begin{itemize}
\item[(i)] $Q$ is \what. 
\item[(ii)] $\mathcal{O}_{\dis_\alpha}=\mathcal{O}_{\dis^\alpha}=\alpha$ for every $\alpha\in Con(Q)$.
\end{itemize}
\end{lemma}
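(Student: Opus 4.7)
The plan is to prove the equivalence by exploiting the two tools already assembled in the paper: Lemma \ref{lemma below lambda}, which provides the key ``below $\lambda$'' inequalities, and Remark \ref{remark on lambda}(i) together with Proposition \ref{pi}(i), which translate the condition $\beta/\alpha\leq \lambda_{Q/\alpha}$ into a statement about relative displacement groups.

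For the direction (i) $\Rightarrow$ (ii), fix $\alpha\in Con(Q)$ and put $\beta=\mathcal{O}_{\dis^\alpha}$ and $\gamma=\mathcal{O}_{\dis_\alpha}$. From the general chain $\mathcal{O}_{\dis_\alpha}\leq \mathcal{O}_{\dis^\alpha}\leq \alpha$ recorded before Corollary \ref{iso of lattice 2}, both $\beta\leq\alpha$ and $\gamma\leq \alpha$. Lemma \ref{lemma below lambda} gives $\alpha/\beta\leq \lambda_{Q/\beta}$ and $\alpha/\gamma\leq \lambda_{Q/\gamma}$. Apply the definition of \what{} to the pair $\beta\leq \alpha$ (resp. $\gamma\leq \alpha$) to conclude $\beta=\alpha$ (resp. $\gamma=\alpha$), which is exactly (ii).

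For the direction (ii) $\Rightarrow$ (i), suppose $\alpha\leq \beta$ and $\beta/\alpha\leq \lambda_{Q/\alpha}$. By Remark \ref{remark on lambda}(i), the latter means $\dis_{\beta/\alpha}=1$. Proposition \ref{pi}(i) rewrites this as $\pi_\alpha(\dis_\beta)=1$, so $\dis_\beta\leq \ker{\pi_\alpha}=\dis^\alpha$. Monotonicity of $\mathcal{O}_*$ then gives $\mathcal{O}_{\dis_\beta}\leq \mathcal{O}_{\dis^\alpha}$. Under the hypothesis (ii), the left-hand side equals $\beta$ and the right-hand side equals $\alpha$, hence $\beta\leq \alpha$ and finally $\alpha=\beta$.

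No step looks genuinely delicate: both implications are short once Lemma \ref{lemma below lambda} is in hand, and Proposition \ref{pi}(i) is doing the real work of converting ``below Cayley kernel after quotient'' into ``contained in $\dis^\alpha$.'' The only mildly subtle point is remembering that the equality $\mathcal{O}_{\dis^\alpha}=\alpha$ in (ii) is automatic once $\mathcal{O}_{\dis_\alpha}=\alpha$ holds, thanks to the squeeze $\mathcal{O}_{\dis_\alpha}\leq \mathcal{O}_{\dis^\alpha}\leq \alpha$; so effectively the two equalities in (ii) collapse into a single condition, and it suffices to produce one sharpness argument in the forward direction and one in the backward direction.
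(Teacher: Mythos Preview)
Your proof is correct and follows essentially the same route as the paper's: both directions hinge on Lemma \ref{lemma below lambda} for (i)$\Rightarrow$(ii) and on translating $\beta/\alpha\leq\lambda_{Q/\alpha}$ into $\dis_\beta\leq\dis^\alpha$ for (ii)$\Rightarrow$(i). You spell out the use of Proposition \ref{pi}(i) and $\ker{\pi_\alpha}=\dis^\alpha$ explicitly where the paper writes a bare ``i.e.,'' and you handle the $\beta$ and $\gamma$ cases of (i)$\Rightarrow$(ii) separately whereas the paper uses only $\alpha/\gamma\leq\lambda_{Q/\gamma}$ and lets the squeeze $\gamma\leq\beta\leq\alpha$ finish the job---but these are purely cosmetic differences.
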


\begin{proof}
(i) $\Rightarrow$ (ii) According to Lemma \ref{lemma below lambda} We have $\gamma=\mathcal{O}_{\dis_\alpha}\leq\beta= \mathcal{O}_{\dis^\alpha}\leq \alpha$ and $\alpha/\gamma\leq \lambda_{Q/\gamma}$ for every $\alpha\in Con(Q)$. Hence, if $Q$ is \what $\gamma=\beta=\alpha$.

(ii) $\Rightarrow$ (i) Let $\alpha,\beta\in Con(Q)$.	Assume that $\beta\leq \alpha$ and $\alpha/\beta\leq \lambda_{Q/\beta}$. Then $\dis_{\alpha/\beta}=1$, i.e. $\dis_\alpha\leq \dis^\beta$. So we have that
$$\alpha=\mathcal{O}_{\dis_\alpha}\leq \mathcal{O}_{\dis^\beta}=\beta$$
and so $\alpha=\beta$.
\end{proof}

It is immediate to see that if $Q$ is a \what left quasigroup, then $\mathcal{O}_*$ is onto and $\dis^*$ and $\dis_*$ are injective on $Q$.

A left quasigroup $Q$ is {\it faithful} if $\lambda_Q=0_Q$ (note that faithful left quasigroups are Cayley). %\comment{The class of faithful left quasigroups is closed under direct products and the class of idempotent faithful left quasigroups is closed under \comment{extensions} \cite[Corollary 1.12]{Super}.}

%\comment{Let $Q$ be a left quasigroup. If all the subalgebras of $Q$ are faithful, $Q$ is said to be {\it superfaithful}. The class of superfaithful left quasigroups is closed under subalgebras and the class of idempotent superfaithful left quasigroups is closed under extensions \cite[Corollary 1.12]{Super}.}

%
%\begin{lemma}\label{all faithful}\cite[Lemma 1.9]{Super}
%Let $Q$ be an idempotent left quasigroup. The following are equivalent:
%\begin{itemize}
%\item[(i)] $Q$ is superfaithful.
%%\item[(ii)] $Sg(a,b)$ is superfaithful for every $a,b\in Q$.
%\item[(ii)] $\mathcal{P}_2 \notin \textbf{S}(Q)$.
%\end{itemize}
%%In particular, if $Q$ is superconnected then $Q$ is superfaithful.% \comment{together with Corollary \ref{P_2 and idempotent}?}
%\end{lemma}

%\comment{blabla}

\begin{lemma}\label{lemma O=con}
Let $(Q,\cdot, \backslash)$ be a left quasigroup. The following are equivalent:

\begin{itemize}
\item[(i)] $\mathcal{O}_*=\c{*}$ on $Q$.
\item[(ii)] $\mathcal{O}_{\dis_\alpha}=\mathcal{O}_{\dis^\alpha}=\alpha=\c{\dis_\alpha}=\c{\dis^\alpha}$ for every $\alpha\in Con(Q)$.
\item[(iii)] $Q/\alpha$ is faithful for every $\alpha\in Con(Q)$.
\end{itemize}
In particular, if (i) holds then $Q$ is \what.
\end{lemma}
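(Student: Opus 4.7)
The plan is to run the cycle (i) $\Rightarrow$ (ii) $\Rightarrow$ (iii) $\Rightarrow$ (ii) $\Rightarrow$ (i), exploiting the universal chain
\[
\mathcal{O}_{\dis_\alpha}\leq \mathcal{O}_{\dis^\alpha}\leq \alpha\leq \c{\dis_\alpha}\leq \c{\dis^\alpha}
\]
recorded before Corollary \ref{iso of lattice 2}, together with Remark \ref{remark on lambda}(ii), which reads $\lambda_{Q/\alpha}=\c{\dis^\alpha}/\alpha$, and Lemma \ref{lemma below lambda}. With these packaged, nothing in the proof is more than a squeeze argument.

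For (i) $\Rightarrow$ (ii), I would simply evaluate the hypothesis $\mathcal{O}_*=\c{*}$ at the two admissible subgroups $\dis_\alpha,\dis^\alpha$ (both lie in $\N'(Q)$ by \cite[Corollary 1.9]{LTT}). This forces $\mathcal{O}_{\dis_\alpha}=\c{\dis_\alpha}$ and $\mathcal{O}_{\dis^\alpha}=\c{\dis^\alpha}$, and inserting these equalities into the chain above collapses every term to $\alpha$. For (ii) $\Rightarrow$ (iii), substituting $\c{\dis^\alpha}=\alpha$ into Remark \ref{remark on lambda}(ii) yields $\lambda_{Q/\alpha}=\alpha/\alpha=0_{Q/\alpha}$, so $Q/\alpha$ is faithful.

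For (iii) $\Rightarrow$ (ii) I would split the claim into the ``con'' half and the ``orbit'' half. The con half is immediate from Remark \ref{remark on lambda}(ii): faithfulness of $Q/\alpha$ forces $\c{\dis^\alpha}/\alpha=0_{Q/\alpha}$, i.e.\ $\c{\dis^\alpha}\leq \alpha$, and the chain then gives $\c{\dis_\alpha}=\c{\dis^\alpha}=\alpha$. For the orbit half, set $\beta=\mathcal{O}_{\dis^\alpha}$ and $\gamma=\mathcal{O}_{\dis_\alpha}$; both are congruences because $\dis^\alpha,\dis_\alpha\in\N'(Q)$, so (iii) applies to $Q/\beta$ and $Q/\gamma$. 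Lemma \ref{lemma below lambda} tells us $\alpha/\beta\leq \lambda_{Q/\beta}$ and $\alpha/\gamma\leq \lambda_{Q/\gamma}$; since the two $\lambda$'s vanish by faithfulness, we conclude $\alpha=\beta=\gamma$.

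For (ii) $\Rightarrow$ (i), given any admissible $N\in\N'(Q)$, I would set $\alpha=\mathcal{O}_N$, which is a congruence by \cite[Corollary 1.9]{LTT} and satisfies $\dis_\alpha\leq N\leq \dis^\alpha$; monotonicity of $\c{*}$ then gives $\c{\dis_\alpha}\leq \c{N}\leq \c{\dis^\alpha}$, and by (ii) both bounds equal $\alpha=\mathcal{O}_N$, hence $\c{N}=\mathcal{O}_N$. The final clause ``(i) $\Rightarrow$ $Q$ is \what'' is then immediate: once we have (i) $\Rightarrow$ (ii), the identity $\mathcal{O}_{\dis_\alpha}=\mathcal{O}_{\dis^\alpha}=\alpha$ is precisely condition (ii) of Lemma \ref{sharp}. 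I do not expect a genuine obstacle here; the only delicate point is in (iii) $\Rightarrow$ (ii), where one must remember to apply faithfulness not to $Q/\alpha$ but to the possibly smaller quotients $Q/\beta$ and $Q/\gamma$ in order to trigger Lemma \ref{lemma below lambda}.
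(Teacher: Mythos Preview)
Your argument is correct. The implications (i) $\Rightarrow$ (ii) $\Rightarrow$ (iii) and the final clause match the paper's proof exactly. Where you diverge is in closing the cycle: the paper goes directly (iii) $\Rightarrow$ (i), whereas you route through (iii) $\Rightarrow$ (ii) $\Rightarrow$ (i). Your (ii) $\Rightarrow$ (i) step is essentially the paper's (iii) $\Rightarrow$ (i) argument verbatim (set $\alpha=\mathcal{O}_N$, use $N\leq \dis^\alpha$ and $\c{\dis^\alpha}=\alpha$), so the only extra content you supply is the ``orbit half'' of (iii) $\Rightarrow$ (ii), which you obtain by invoking Lemma~\ref{lemma below lambda} at the auxiliary congruences $\beta=\mathcal{O}_{\dis^\alpha}$ and $\gamma=\mathcal{O}_{\dis_\alpha}$. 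That step is valid but unnecessary: to verify $\c{N}=\mathcal{O}_N$ for an arbitrary $N\in\N'(Q)$ one never needs the orbit identities $\mathcal{O}_{\dis_\alpha}=\mathcal{O}_{\dis^\alpha}=\alpha$, only the con identity $\c{\dis^\alpha}=\alpha$, and the latter follows from faithfulness of $Q/\alpha$ alone via Remark~\ref{remark on lambda}(ii). So the paper's cycle is shorter and avoids Lemma~\ref{lemma below lambda} entirely; your detour costs nothing in correctness but buys an explicit proof of the full string of equalities in (ii) directly from (iii), which is mildly more informative.
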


\begin{proof}
(i) $\Rightarrow$ (ii) Let $\alpha\in Con(Q)$.	Since we have
$$\mathcal{O}_{\dis_\alpha}\leq\mathcal{O}_{\dis^\alpha}\leq \alpha\leq \c{\dis_\alpha}\leq \c{\dis^\alpha}$$
if $\mathcal{O}_*=\c{*}$ equality holds.

(ii) $\Rightarrow$ (iii) According to Remark \ref{remark on lambda}(ii) $\lambda_{Q/\alpha}=\c{\dis^\alpha}/\alpha$. So we have that $\lambda_{Q/\alpha}=0_Q$.

(iii) $\Rightarrow$ (i) Let $N\in \N(Q)$ and let $\alpha=\mathcal{O}_N\leq \c{N}$. Thus, $\lambda_{Q/\alpha}=\c{\dis^\alpha}/\alpha=0_{Q/\alpha}$ i.e. $\c{\dis^\alpha}=\alpha$. Since $N\leq \dis^\alpha$ then $\c{N}\leq \c{\dis^\alpha}=\alpha$, and so equality holds.
\end{proof}

The property in Lemma \ref{lemma O=con}(iii) is clearly closed under homomorphic images. 

%%\comment{Moreover, note that \what Cayley left quasigroups are faithful. - move?}
%
%\begin{corollary}
%\comment{keep?}
%Let $Q$ be a Cayley left quasigroup. Then $Q$ is \what if and only if $\mathcal{O}_*=\c{*}$ on $Q$.
%\end{corollary}
%
%
%% If $Q$ is a superconnected idempotent left quasigroup then $\mathcal{O}_{\dis_\alpha}=\mathcal{O}_{\dis^\alpha}=\alpha=\c{\dis_\alpha}=\c{\dis^\alpha}$ for every $\alpha\in Con(Q)$ \cite{nilpotentleft} and so in particular $\mathcal{O}_*=\c{*}$ on $Q$, by Lemma \ref{lemma O=con}. 
%%
%
% 
%%\section{Two Galois connections}

\section{Galois connections for left quasigroups}\label{Galois}
Let $(A, \leq)$ and $(B, \leq)$ be two posets. A {\it monotone Galois connection} between these posets consists of two monotone functions: $F : A \longrightarrow B$ and $G : B \longrightarrow A$, such that for all $x \in A$ and $y \in B$, we have
$$F(x) \leq y\, \text{ if and only if } \, x \leq G(y).$$

If $(F,G)$ is a Galois connection between two posets, then $FGF=F$ and $GFG=G$. In particular, if $F$ (resp. $G$) is an isomorphism then $F^{-1}=G$. In such case we say that the pair $(F,G)$ provides an isomorphism between the two posets.

%Let $Q$ be a left quasigroup and $(F,G)$ be a pair of mappings providing a monotone Galois connection between $Con(Q)$ and $\N'(Q)$. We say that $Q$ is a {\it $(F,G)\,-$ left quasigroup} if the pair $(F,G)$ provides an isomorphism of lattices.

%\subsection{A first Galois connection}

%\section{Left quasigroups with congruences determined by subgroups}
The following Galois connection for left quasigroups was first introduced for racks in \cite{CP} and then for left quasigroups in general in \cite{LTT}.

\begin{theorem}\label{galois_connection}\cite[Theorem 1.10]{LTT}
Let $(Q,\cdot, \backslash)$ be a left quasigroup. The pair of mappings $\mathcal{O}_*$ and $\dis^{*}$ provides a monotone Galois connection between $Con(Q)$ and $\N'(Q)$.
\end{theorem}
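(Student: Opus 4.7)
The plan is to unpack the defining equivalence of a monotone Galois connection directly from the definitions of $\mathcal{O}_*$ and $\dis^*$. Concretely, I need to verify:
\begin{itemize}
\item[(a)] Well-definedness, i.e.\ $\mathcal{O}_*$ sends $\N'(Q)$ into $Con(Q)$ and $\dis^*$ sends $Con(Q)$ into $\N'(Q)$.
\item[(b)] Both maps are monotone.
\item[(c)] For every $\alpha\in Con(Q)$ and $N\in \N'(Q)$,
\[
\mathcal{O}_N\leq \alpha \quad\Longleftrightarrow\quad N\leq \dis^\alpha.
\]
\end{itemize}

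Points (a) and (b) I would dispatch quickly by citing what is already assembled in the preliminaries: $\dis^\alpha\in \N'(Q)$ for every congruence is recorded right after the definition of $\N'(Q)$, while $\mathcal{O}_N\in Con(Q)$ for $N\in \N'(Q)$ follows from \cite[Corollary 1.9]{LTT} (which is invoked in the preceding remark). Monotonicity of both $\dis^*$ and $\mathcal{O}_*$ is already observed in the text; it is immediate from the set-theoretic definitions.

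The entire content of the statement is therefore (c), and this is essentially a tautological unwinding. For the forward implication, assume $N\leq \dis^\alpha$ and suppose $x\,\mathcal{O}_N\,y$. Then $x=h(y)$ for some $h\in N\leq \dis^\alpha$, so by the very definition of $\dis^\alpha$ we have $h(y)\,\alpha\, y$, i.e.\ $x\,\alpha\,y$; thus $\mathcal{O}_N\leq \alpha$. For the reverse implication, assume $\mathcal{O}_N\leq \alpha$ and pick any $h\in N$. For each $x\in Q$ we have $h(x)\,\mathcal{O}_N\,x$ by construction, hence $h(x)\,\alpha\,x$, which says exactly that $h\in \dis^\alpha$; therefore $N\leq \dis^\alpha$.

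There is no real obstacle: the only thing to be careful about is to state the connection with the correct direction of the orderings (since $\mathcal{O}_*$ goes from subgroups to congruences and $\dis^*$ goes the opposite way, the Galois condition reads as in (c) above), and to make sure the well-definedness on both sides is traceable to earlier lemmas rather than proved again. Once the definitions are written out, (c) is a two-line verification.
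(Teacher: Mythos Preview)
Your proposal is correct: the well-definedness is properly sourced from the preliminaries, monotonicity is immediate, and the equivalence $\mathcal{O}_N\leq \alpha \Leftrightarrow N\leq \dis^\alpha$ is indeed a two-line unwinding of the definitions. The paper does not supply its own proof of this theorem---it simply imports the result as \cite[Theorem 1.10]{LTT}---so there is nothing to compare your argument against; your direct verification is exactly the standard one.
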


Let $(Q,\cdot, \backslash)$ be a left quasigroup and $\alpha\in Con(Q)$. According to Lemma \ref{pi}, Corollary \ref{iso of lattice 2} and Theorem \ref{galois_connection} the following diagram (where the vertical arrow labeled by $*/\alpha$ is the canonical isomorphism $\beta\mapsto \beta/\alpha$) is commutative:
\begin{equation}\label{comuting diagram}
\xymatrixcolsep{6pc}\xymatrix{
\setof{\beta\in Con(Q)}{\alpha\leq \beta}  \ar[r]^{\dis^*}\ar[d]^{*/\alpha} & \setof{N\in \N'(Q)}{\dis^\alpha\leq N}\ar[d]^{\pi_\alpha} \\
Con(Q/\alpha) \ar[r]^{\dis^*}& \N'(Q/\alpha)
}
\end{equation}

 Let $\beta=\mathcal{O}_{\dis^\alpha}$. The pair $(\dis^*,\mathcal{O}_*)$ provides a Galois connection and so $\dis^\alpha=\dis^{\beta}$. According to Lemma \ref{image of orbit congruences}(i) we have the following commuting diagram:
\begin{equation}
\xymatrixcolsep{6pc}\xymatrix{ \setof{N\in \N'(Q)}{\dis^\beta\leq N}\ar[r]^{\mathcal{O}_*}\ar[d]^{\pi_\beta} & 
\setof{\gamma\in Con(Q)}{\beta\leq \gamma}  \ar[d]^{*/\beta} \\
\N'(Q/\beta) \ar[r]^{\mathcal{O}_*}& Con(Q/\beta)
}\label{comuting diagram_2}
\end{equation}

We say that $Q$ has the {\it congruence determined by orbits} (shortly, CDOs) if the pair $(\dis^*,\mathcal{O}_*)$ provides an isomophism between $Con(Q)$ and $\N(Q)$. Note that if $Q$ has CDOs then $1_Q=\mathcal{O}_{\dis(Q)}$, namely $Q$ is connected by $\dis(Q)$.

By Corollary \ref{iso of lattice 2}, the map $\pi_\alpha$ is bijective between $\setof{N\in \N'(Q)}{\dis^\alpha\leq N}$ and $\N'(Q/\alpha)$. Then according to diagram \eqref{comuting diagram}, if $\dis^*$ is injective (resp. onto) on $Q$ then $\dis^*$ is also injective (resp. onto) on $Q/\alpha$.

%\begin{corollary}\label{factor of FG 1}
%Let $Q$ be a left quasigroup and $\alpha\in Con(Q)$. If $Q$ has CDOs then $Q/\alpha$ has CDOs.
%\end{corollary}

\begin{corollary}\label{factor of FG 1}
The class of left quasigroups having CDOs is closed under homomorphic images.
\end{corollary}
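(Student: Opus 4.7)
The plan is a diagram chase using the commutative square \eqref{comuting diagram}, whose vertical arrows are already known to be lattice isomorphisms: $*/\alpha$ canonically, and $\pi_\alpha$ by Corollary \ref{iso of lattice 2}. Fix $\alpha\in Con(Q)$; I want to show that the bottom horizontal arrow $\dis^{*}:Con(Q/\alpha)\longrightarrow\N'(Q/\alpha)$ is a bijection, which combined with Theorem \ref{galois_connection} is enough to conclude that the pair $(\dis^{*},\mathcal{O}_*)$ provides an isomorphism on $Q/\alpha$.

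First I would check that, under the CDOs hypothesis, the \emph{top} horizontal arrow of \eqref{comuting diagram} is a bijection from $\setof{\beta\in Con(Q)}{\alpha\leq\beta}$ onto $\setof{N\in \N'(Q)}{\dis^\alpha\leq N}$. Injectivity is inherited from the fact that $\dis^{*}$ is bijective on all of $Con(Q)$. For surjectivity, pick $N\in \N'(Q)$ with $\dis^\alpha\leq N$; since $\dis^{*}$ is onto, $N=\dis^\beta$ for some $\beta\in Con(Q)$, and applying the monotone inverse $\mathcal{O}_{*}$ to $\dis^\alpha\leq \dis^\beta$ yields $\alpha=\mathcal{O}_{\dis^\alpha}\leq \mathcal{O}_{\dis^\beta}=\beta$, where both outer equalities use CDOs.

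Then I would conclude by commutativity of \eqref{comuting diagram}: the top arrow is bijective and the two vertical arrows are bijective, so the bottom arrow $\dis^{*}:Con(Q/\alpha)\longrightarrow\N'(Q/\alpha)$ is bijective as well. Since Theorem \ref{galois_connection} provides the Galois connection $(\dis^{*},\mathcal{O}_{*})$ on $Q/\alpha$, the only possible inverse of $\dis^{*}$ is $\mathcal{O}_{*}$, hence $Q/\alpha$ has CDOs. No step should be a real obstacle here: the argument is essentially packaged in the lemmas and diagrams of this section and amounts to a compact diagram chase, exactly along the lines of the remark made just before the corollary.
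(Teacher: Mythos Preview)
Your argument is correct and is essentially the approach the paper indicates in the sentence immediately preceding the corollary: the diagram \eqref{comuting diagram} has bijective vertical arrows, so bijectivity of the top $\dis^{*}$ forces bijectivity of the bottom one. Your explicit check that the top arrow is surjective onto $\setof{N\in\N'(Q)}{\dis^{\alpha}\leq N}$, using $\alpha=\mathcal{O}_{\dis^{\alpha}}\leq\mathcal{O}_{\dis^{\beta}}=\beta$ from the CDOs hypothesis, just spells out what the paper leaves implicit.
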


The following example shows that having CDOs is a relevant property.

\begin{example}
Let $Q$ be a simple left quasigroup, $N\in \N'(Q)$ and $\alpha=\mathcal{O}_N$. So either $\alpha=0_Q$ and so $N=1$, or $\alpha=1_Q$ and thus $\dis(Q)=\dis_\alpha\leq N$. Therefore, $\N'(Q)=\{1, \dis(Q)\}$. Note that $\dis(Q)=1$ if and only if $\lambda_Q=1_Q$. We have to discuss two cases:
\begin{itemize}
\item[(i)] Let $\lambda_Q=1_Q$. If $Q$ is idempotent then $Q\cong\mathcal{P}_2$, otherwise $Q\cong \aff(\Z_p,0,1,1)$ for $p$ prime.  

\item[(ii)] If $\lambda_Q\neq 1_Q$, we have $\N'(Q)=\{1, \dis(Q)\}$ and accordingly $Q$ has CDOs.
\end{itemize}
In particular, simple idempotent left quasigroups with size bigger than $2$ have CDOs.
 \end{example}

%
%The property of being \what and having CDOs are related properties.
%%
%%
%%\begin{proposition}\label{CDSg}
%%Let $Q$ be a left quasigroup. The following are equivalent:
%%\begin{itemize}
%%\item[(i)]  $Q$ is \what.
%%\item[(ii)] $Q$ has CDOs and (B) holds.
%\end{itemize}
%
%\end{proposition}

%\subsection{A second Galois connection}

%In this section we denote by $Equiv(Q)$ the lattice of equivalence relations of $Q$.

Let us introduce a second Galois connection for left quasigroups.

\begin{proposition}\label{galois_connection_2}%\cite[Theorem 1.10]{LTT}
Let $(Q,\cdot, \backslash)$ be a left quasigroup. The pair of mappings $\c{*}$ and $\dis_{*}$ provides a monotone Galois connection between $Equiv(Q)$ and the lattice of normal subgroups of $\lmlt(Q)$. %$\N'(Q)$.
\end{proposition}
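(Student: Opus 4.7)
The plan is to verify that the two maps land in the correct posets, that they are both monotone, and then to establish the adjunction $\dis_\alpha \leq N \iff \alpha \leq \c{N}$ directly from the definitions.

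First I would check well-definedness: given a normal subgroup $N$ of $\lmlt(Q)$, the relation $\c{N}$ really is an equivalence relation. Reflexivity comes from $L_xL_x^{-1}=1\in N$; symmetry from closure under inverse, since $(L_xL_y^{-1})^{-1}=L_yL_x^{-1}$; transitivity from $L_xL_z^{-1}=(L_xL_y^{-1})(L_yL_z^{-1})\in N$. Conversely, for an equivalence $\alpha$, the group $\dis_\alpha$ is normal in $\lmlt(Q)$ by construction, since its generating set $\{h L_xL_y^{-1}h^{-1}: x\,\alpha\,y,\, h\in\lmlt(Q)\}$ is already closed under conjugation by any element of $\lmlt(Q)$. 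Monotonicity of both $\c{*}$ and $\dis_*$ is immediate: larger $N$ contains more elements $L_xL_y^{-1}$, and larger $\alpha$ supplies more generators for $\dis_\alpha$.

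For the adjunction, the forward direction is immediate: if $\dis_\alpha\leq N$ and $x\,\alpha\,y$, then $L_xL_y^{-1}\in\dis_\alpha\leq N$, so $x\,\c{N}\,y$, proving $\alpha\leq\c{N}$. For the reverse direction, assume $\alpha\leq\c{N}$. To see $\dis_\alpha\leq N$ it suffices to check that each generator $hL_xL_y^{-1}h^{-1}$ (with $x\,\alpha\,y$ and $h\in\lmlt(Q)$) lies in $N$. By assumption, $x\,\alpha\,y$ implies $x\,\c{N}\,y$, hence $L_xL_y^{-1}\in N$; normality of $N$ then gives $hL_xL_y^{-1}h^{-1}\in N$.

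There is really no main obstacle here. The proof is essentially a bookkeeping exercise carried out entirely at the level of generators, and the crucial observation is that $\dis_\alpha$ was designed to be the smallest normal subgroup containing all $L_xL_y^{-1}$ with $x\,\alpha\,y$, which is precisely the universal property required for the left adjoint to $\c{*}$. The only point that deserves a sentence of care is noting that the generators of $\dis_\alpha$ are of the form $hL_xL_y^{-1}h^{-1}$ rather than the bare $L_xL_y^{-1}$, but the normality of $N$ absorbs the conjugation without difficulty.
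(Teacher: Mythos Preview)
Your proof is correct and follows essentially the same approach as the paper's, just with more detail: the paper verifies only the adjunction $\dis_\alpha\leq N\iff\alpha\leq\c{N}$ (using $\alpha\leq\c{\dis_\alpha}$ and monotonicity of $\c{*}$ for one direction, and the containment of generators in $N$ for the other), while you additionally spell out well-definedness and monotonicity. There is no substantive difference in strategy.
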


\begin{proof}
If $\alpha\leq \c{N}$, then $\setof{L_x L_y^{-1}}{x,\,\alpha\, y}\subseteq N$ and therefore $\dis_\alpha\leq N$. On the other hand, if $\dis_\alpha\leq N$, then $\alpha\leq \c{\dis_\alpha}\leq \c{N}$.
\end{proof}

Let $(Q,\cdot, \backslash)$ be a left quasigroup and $\alpha\in Con(Q)$. According to Lemma \ref{pi}, Corollary \ref{iso of lattice 2} and Theorem \ref{galois_connection} the following diagram is commutative:

\begin{equation}
\label{comuting diagram_1}
\xymatrixcolsep{6pc}\xymatrix{
\setof{\beta\in Con(Q)}{\alpha\leq \beta}  \ar[r]^{\dis_*}\ar[d]^{*/\alpha} & \setof{N\in \N'(Q)}{\dis_\alpha\leq N}\ar[d]^{\pi_\alpha} \\
Con(Q/\alpha) \ar[r]^{\dis_*}& \N'(Q/\alpha)
}
\end{equation}

Let $(Q,\cdot, \backslash)$ be a left quasigroup and $\alpha\in Con(Q)$. The following diagram is commutative by Lemma \ref{image of orbit congruences}(ii):
\begin{equation}
\xymatrixcolsep{6pc}\xymatrix{ \setof{N\in \N'(Q)}{\dis^\alpha\leq N}\ar[r]^{\c{*}}\ar[d]^{\pi_\alpha} & 
\setof{\beta\in Equiv(Q)}{\alpha\leq \beta}  \ar[d]^{*/\alpha} \\
\N'(Q/\alpha) \ar[r]^{\c{*}}& Equiv(Q/\alpha)
}\label{comuting diagram_3}
\end{equation}
%Indeed if $\dis^\alpha\leq N\in \N'(Q)$ we have that $[x]\, \mathcal{O}_N/\gamma\, [y]$ if and only if $y=h(x)$ for some $h\in N$ and so $[y]=[h(x)]=\pi_\gamma(h)[(x)]$. Thus, $[x]\, \mathcal{O}_{\pi_\gamma(N)}\, [y]$. 

We say that $Q$ has the {\it congruence determined by subgroups} (shortly, CDSg) if the pair $(\dis_*,\c{*})$ provides an isomophism between $Con(Q)$ and $\N'(Q)$.
%
%\comment{CDSg $\Rightarrow$ Cayley. Does it make sense to restrict to Cayley when we talk about CDSg?}

%
%
%\begin{corollary}\label{factor of FG 2}
%Let $Q$ be a left quasigroup and $\alpha\in Con(Q)$. If $Q$ has CDSg then $Q/\alpha$ has CDSg.
%\end{corollary}

The map $\pi_\alpha$ is bijective between $\setof{N\in \N'(Q)}{\dis^\alpha\leq N}$ and $\N'(Q/\alpha)$. Then according to diagram \eqref{comuting diagram_3}, if $\c{*}$ is injective (resp. onto) on $Q$ then $\c{*}$ is also injective (resp. onto) on $Q/\alpha$.

\begin{corollary}\label{factor of FG 2}
The class of left quasigroups having CDSg is closed under homomorphic images.
\end{corollary}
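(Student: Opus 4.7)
The plan is to deduce CDSg for $Q/\alpha$ directly from the two transfer statements obtained in the paragraph immediately preceding the corollary, using the commuting diagram \eqref{comuting diagram_3}. Since $Q$ has CDSg precisely when $\c{*} \colon \N'(Q) \to Con(Q)$ is a bijection---equivalently, both injective and onto---the corollary will follow by conjoining the injectivity and surjectivity transfers, which separately move from $Q$ to $Q/\alpha$ via the bijective vertical arrows $\pi_\alpha$ (Corollary \ref{iso of lattice 2}) and $*/\alpha$ (canonical).

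Unpacking the two transfers: for injectivity, one restricts the injective $\c{*}$ on $\N'(Q)$ to the subset $\{N \in \N'(Q) : \dis^\alpha \leq N\}$ and transports along $\pi_\alpha$ to conclude $\c{*}$ is injective on $\N'(Q/\alpha)$. For surjectivity, given $\gamma \in Con(Q/\alpha)$, I would lift it to $\beta \in Con(Q)$ with $\alpha \leq \beta$ and $\beta/\alpha = \gamma$; use CDSg on $Q$ to pick $M \in \N'(Q)$ with $\c{M} = \beta$; replace $M$ by $N = M \vee \dis^\alpha \in \N'(Q)$ to ensure $\dis^\alpha \leq N$; and invoke Lemma \ref{image of orbit congruences}(ii) to get $\c{\pi_\alpha(N)} = \c{N}/\alpha = \gamma$, producing the desired preimage of $\gamma$ in $\N'(Q/\alpha)$.

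The main obstacle is matching $\c{\pi_\alpha(N)} = \gamma$ on the nose: under CDSg the map $\c{*}$ is a lattice isomorphism, so $\c{N} = \beta \vee \c{\dis^\alpha}$, and the required equality reduces to $\c{\dis^\alpha} \leq \beta$ for every $\beta \geq \alpha$---i.e., $\c{\dis^\alpha} = \alpha$, or equivalently, $Q/\alpha$ is faithful (Remark \ref{remark on lambda}(ii)). This identity is the substantive content that is absorbed into the diagrammatic transfer asserted just before the corollary; once it is taken for granted, the corollary is an immediate consequence of applying the injectivity and surjectivity transfers together.
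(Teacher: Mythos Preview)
Your plan mirrors the paper's: both invoke the commuting square \eqref{comuting diagram_3} together with the bijection $\pi_\alpha$ of Corollary~\ref{iso of lattice 2}, and the injectivity half transfers exactly as you say. The difficulty lies in the surjectivity half. You pinpoint the obstruction precisely: after replacing $M$ by $N=M\vee\dis^\alpha$ one obtains $\c{N}=\beta\vee\c{\dis^\alpha}$ (using that $\c{*}$ is a lattice isomorphism under CDSg), and to recover $\gamma=\beta/\alpha$ one needs $\c{\dis^\alpha}=\alpha$. But you do not prove this identity---you declare it ``absorbed into the diagrammatic transfer'' and proceed ``once it is taken for granted''. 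That is not an argument; it is a restatement of what must be shown.

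This is a genuine gap, and the diagram alone does not close it. The domain of the top arrow in \eqref{comuting diagram_3} is $\{N\in\N'(Q):\dis^\alpha\leq N\}$, whereas the $\c{*}$-preimage of an arbitrary $\beta\geq\alpha$ (namely $\dis_\beta$, by CDSg) is only guaranteed to lie above $\dis_\alpha$, not above $\dis^\alpha$. The two thresholds coincide exactly when $\dis_\alpha=\dis^\alpha$, equivalently $\c{\dis^\alpha}=\alpha$, equivalently $Q/\alpha$ is faithful---precisely the claim you are assuming. Observe also that $\dis_*=\dis^*$ is part of the content of Theorem~\ref{CDSg vs CDOs}(iii), and the paper's proof of (i)$\Rightarrow$(ii) there \emph{uses} the present corollary (to get ``$Q$ and its factors are all faithful''); so appealing to that theorem here would be circular. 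To turn your outline into a proof you must supply an independent argument that CDSg on $Q$ forces $\c{\dis^\alpha}=\alpha$ for every $\alpha\in Con(Q)$. The paper's one-line justification before the corollary is equally terse on this point, so you have correctly located the crux; what remains is to actually establish it.
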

%
%\begin{lemma}
%Let $Q$ be a left quasigroup. If $\c{N}$ is a congruence for every $N\in\N'(Q)$ then $Q/\alpha$ has the Cayley property for every $\alpha\in Con(Q)$.
%\end{lemma}
%\comment{NOT USED: remove?}
%\begin{proof}
%Clear since $\lambda_{Q/\alpha}=\c{\dis^\alpha}/\alpha$.
%\end{proof}
%
%\comment{add blabla}
%
%\begin{proposition}\label{con-dis}
%Let $Q$ be a left quasigroup. The following are equivalent:
%\begin{itemize}
%\item[(i)] $Q$ is \comment{\what} and $Q/\alpha$ is faithful for every $\alpha\in Con(Q)$.
%\item[(ii)] $Q$ has CDSg.
%\end{itemize}
%
%
%
%
%\end{proposition}
%
%
%\begin{proof}
%(i) $\Rightarrow$ (ii) Let $N\in \N'(Q)$ and $\alpha= \c{N}$. Then $\dis_\alpha\leq N\leq \dis^{\mathcal{O}_N}\leq \dis^\alpha$ since $\mathcal{O}_N\leq \c{N}$. Thus, $N=\dis_\alpha=\dis^\alpha$. 
%
%On the other hand $\alpha=\c{\dis^\alpha}=\c{\dis_\alpha}$ since every factor of $Q$ is faithful.
%
%(ii) $\Rightarrow$ (i) Since $\c{1}=\lambda_Q=\c{\dis_{0_Q}}=0_Q$ then $Q$ is faithful and so they are its factors, indeed every factor of $Q$ has CDSg according to Corollary \ref{factor of FG 2}. Therefore $Q/\alpha$ is faithful, namely $\alpha=\c{\dis_\alpha}=\c{\dis^\alpha}$ and so $\dis^\alpha=\dis_\alpha$. 
%\end{proof}

 \begin{example}\label{example of CDSg} 
 
 Let us show some examples of left quasigroups having CDSg.
\begin{itemize}

\item[(i)] Let $Q$ be a simple Cayley left quasigroup. If $\lambda_Q=1_Q$ then either $Q=\mathcal{P}_2$ or $Q=\aff(\Z_p,0,1,1)$ for $p$ prime. Otherwise $\lambda_Q=0_Q$ and $Q$ has CDSg.
%\end{example} 
% 
%
%\begin{example}
\item[(ii)] Superconnected idempotent semiregular left quasigroup are quandles (see \cite[Lemma 3.3]{nilpotentleft}). Such quandles have CDSg \cite[Proposition 2.12]{Super}. %Indeed, they satisfy the assumption of Proposition 
% Superconnected idempotent semiregular left quasigroup are principal superconnected quandles %and so they have the CDSg property \cite[Proposition 2.13]{Super} (in particular for such quandles the properties (A) and (B) of Proposition \ref{CDSg} holds). %Indeed, they satisfy the assumption of Proposition \ref{CDSg3} (see Lemma \ref{semiregular super}).
\end{itemize}

\end{example}

%\subsection{A comparison}

%
%Let $Q$ be a left quasigroup, recall that $\N'(Q)=\setof{N\in \N(Q)}{N\leq \dis(Q)}$ and the maps $\dis^*$ and $\mathcal{O}_*$ provide a monotone Galois connection between $Con(Q)$ and $\N'(Q)$. If $Q$ is a rack, also the pair $\dis_*$ and $\c{*}$ provides a second monotone Galois connection. The pair $\dis_*$ and $\c{*}$ provides an isomorphism between $Con(Q)$ and $\N'(Q)$ if and only if the following two conditions hold:
%\begin{itemize}
%\item[(A)] $\dis_*=\dis^*$ on $Q$ (i.e. $\dis_\alpha=\dis^\alpha$ for every $\alpha\in Con(Q)$).
%%\item[(ii)] If $\alpha\in Con(Q)$ and $\alpha\leq \lambda_Q$ then $\alpha=0_Q$.
%\item[(A')] $\lambda_{Q/\alpha}=0_{Q/\alpha}$ for every $\alpha\in Con(Q)$.
%\end{itemize}
%In particular, in (A) and (B) hold then $\c{*}=\mathcal{O}_*$ on $Q$ and so also the pair $\mathcal{O}_*$ and $\dis^*$ trivially provides an isomorphism of lattice. For racks, property (A') is actually equivalent to:
%\begin{itemize}
%\item[(B)] If $\alpha,\beta\in Con(Q)$, $\alpha\leq\beta$ and $\beta/\alpha\leq \lambda_{Q/\alpha}$ then $\alpha=\beta$.
%\end{itemize}	

%

The following proposition extends \cite[Proposition 3.13]{CP}. %(indeed $\mathcal{O}_*=\c{*}$ on a left quasigroup $Q$ if and only if all the factors of $Q$ are faithful).

%
%\begin{remark}\label{remark O=c}
%Let $Q$ be a left quasigroup having CDSg (resp. CDOs). Then $\dis_*=\dis^*$ on $Q$ if and only if $\mathcal{O}_*=\c{*}$ on $Q$.
%\end{remark}
%

\begin{theorem}\label{CDSg vs CDOs}
Let $(Q,\cdot, \backslash)$ be a left quasigroup. The following are equivalent:
\begin{itemize}
\item[(i)]  $Q$ has CDSg.

\item[(ii)] $Q$ is \what and has CDOs.% and $\mathcal{O}_*=\c{*}$ on $Q$.

%\item[(iii)] $Q$ has CDOs and $Q/\alpha$ is faithful for every $\alpha\in Con(Q)$.

\item[(iii)] $\c{*}=\mathcal{O}_*$ and $\dis_*=\dis^*$ on $Q$.

\end{itemize}
\end{theorem}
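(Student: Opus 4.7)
My plan is to prove the cycle (i)$\Rightarrow$(iii)$\Rightarrow$(ii)$\Rightarrow$(i), using Lemma~\ref{lemma O=con} as the main bridge between the operator equality $\c{*}=\mathcal{O}_*$ and the faithfulness of all quotients, and using the following elementary fact about monotone Galois connections as the other workhorse: if $F$ is a bijection in a connection $(F,G)$, then $G$ is automatically $F^{-1}$ (from $FGF=F$ and injectivity of $F$).

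For (i)$\Rightarrow$(iii), the key step is to extract faithfulness from CDSg. Since $1\in\N'(Q)$ and $\dis_{0_Q}=1$, the bijectivity of $\dis_*$ immediately forces $\c{1}=\lambda_Q=0_Q$, so $Q$ is faithful. By Corollary~\ref{factor of FG 2}, every quotient $Q/\alpha$ inherits CDSg and hence faithfulness, and Lemma~\ref{lemma O=con}~(iii)$\Rightarrow$(i) then yields $\c{*}=\mathcal{O}_*$ on $Q$. From (i)$\Rightarrow$(ii) of the same lemma, $\c{\dis_\alpha}=\c{\dis^\alpha}=\alpha$ for every $\alpha\in Con(Q)$, and the injectivity of $\c{*}$ on $\N'(Q)$ (as inverse of the bijection $\dis_*$), combined with $\dis_\alpha,\dis^\alpha\in\N'(Q)$, forces $\dis_\alpha=\dis^\alpha$, i.e.\ $\dis_*=\dis^*$.

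For (iii)$\Rightarrow$(ii), Lemma~\ref{lemma O=con}~(i)$\Rightarrow$(ii) supplies $\mathcal{O}_{\dis_\alpha}=\mathcal{O}_{\dis^\alpha}=\alpha$, which is sharpness by Lemma~\ref{sharp}. For CDOs, I would show directly that $(\dis^*,\mathcal{O}_*)$ is a mutually inverse pair: $\mathcal{O}_{\dis^\alpha}=\alpha$ handles one composition, and given $N\in\N'(Q)$ the defining sandwich $\dis_{\mathcal{O}_N}\leq N\leq \dis^{\mathcal{O}_N}$ combined with $\dis_{\mathcal{O}_N}=\dis^{\mathcal{O}_N}$ (from (iii)) collapses to $N=\dis^{\mathcal{O}_N}$, handling the other.

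For (ii)$\Rightarrow$(i), sharpness yields $\mathcal{O}_{\dis_\alpha}=\mathcal{O}_{\dis^\alpha}=\alpha$ via Lemma~\ref{sharp}; CDOs makes $\mathcal{O}_*$ injective on $\N'(Q)$, whence $\dis_\alpha=\dis^\alpha$ and $\dis_*$ inherits the bijectivity of $\dis^*$. The Galois-connection fact then makes $\c{*}$ the inverse of $\dis_*$ in Proposition~\ref{galois_connection_2}, giving CDSg. The main obstacle is the implication (i)$\Rightarrow$(iii): the equality $\c{*}=\mathcal{O}_*$ is not an immediate order-theoretic consequence of the bijectivity of $\dis_*$ but must be bootstrapped through faithfulness of every quotient, which is why Corollary~\ref{factor of FG 2} (closure of CDSg under homomorphic images) plays a pivotal role.
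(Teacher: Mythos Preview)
Your argument is correct and uses the same toolkit as the paper (Lemma~\ref{lemma O=con}, Lemma~\ref{sharp}, and the sandwich $\dis_{\mathcal{O}_N}\leq N\leq \dis^{\mathcal{O}_N}$), but you traverse the cycle in the opposite direction: the paper proves (i)$\Rightarrow$(ii)$\Rightarrow$(iii)$\Rightarrow$(i), while you do (i)$\Rightarrow$(iii)$\Rightarrow$(ii)$\Rightarrow$(i). Your (i)$\Rightarrow$(iii) does a little more work than the paper's (i)$\Rightarrow$(ii) because you push all the way to $\dis_*=\dis^*$ via injectivity of $\c{*}$; in exchange, your (iii)$\Rightarrow$(ii) is a clean direct verification with the sandwich, whereas the paper's (ii)$\Rightarrow$(iii) is a one-line appeal to inverses.

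One imprecision is worth flagging in your (ii)$\Rightarrow$(i). You invoke ``the Galois-connection fact'' from Proposition~\ref{galois_connection_2} to conclude $\c{*}=(\dis_*)^{-1}$, but that proposition gives a connection between $Equiv(Q)$ and \emph{all} normal subgroups of $\lmlt(Q)$, not between $Con(Q)$ and $\N'(Q)$; the identity $FGF=F$ forces $G=F^{-1}$ only when $F$ is bijective on the \emph{full} domain of the connection, and $\c{N}$ need not lie in $Con(Q)$ for $N\in\N'(Q)$. This does not damage your implication: you have already shown that $\dis_*=\dis^*$ is an order isomorphism $Con(Q)\to\N'(Q)$ with monotone inverse $\mathcal{O}_*$, which is exactly what the paper's phrase ``$(\dis_*,\c{*})$ provides an isomorphism'' requires. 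But the extra assertion that $\c{*}$ itself is that inverse on $\N'(Q)$ is not a formal consequence of Proposition~\ref{galois_connection_2}; it needs the faithfulness-of-all-quotients route (Lemma~\ref{lemma O=con}) that you used in (i)$\Rightarrow$(iii). The paper's own (ii)$\Rightarrow$(iii) skates over the same point when it writes $(\dis_*)^{-1}=\c{*}$.
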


%\comment{need to use \what. \what + CDOs $\Rightarrow$ faithful?}

\begin{proof}
(i) $\Rightarrow$ (ii) Clearly $\dis_{\lambda_Q}=\dis_{\c{1}}=1=\dis_{0_Q}$ and so $\lambda_Q=0_Q$. Therefore $Q$ and its factors are all faithful. By Lemma \ref{lemma O=con} $Q$ is \what and we have that $\c{*}=\mathcal{O}_*$. Since $\c{*}$ is an isomorphism between $\N'(Q)$ and $Con(Q)$ then so it is $\mathcal{O}_*$.

(ii) $\Rightarrow$ (iii) By Lemma \ref{sharp} we have that $\mathcal{O}_{\dis_\alpha}=\mathcal{O}_{\dis^\alpha}$ for every $\alpha\in Con(Q)$. Since $Q$ has CDOs we have that $\dis_*=\dis^*$ on $Q$ and accordingly $\mathcal{O}_*=(\dis^*)^{-1}=(\dis_*)^{-1}=\c{*}$ on $Q$.

(iii) $\Rightarrow$ (i) Let $N\in \N'(Q)$ and $\alpha=\mathcal{O}_{N}=\c{N}$. Then $\dis_{\alpha}\leq N\leq \dis^\alpha$ and so $N=\dis_{\c{N}}$. On the other hand, according to Lemma \ref{lemma O=con} we have that $\alpha=\c{\dis_\alpha}$ for every $\alpha\in Con(Q)$.
\end{proof}

Some classes of left quasigroups are know to be \what:

%The property in (B) of Proposition \ref{CDSg} has also an universal algebraic interpretation in terms of {\it strongly abelian congruences} as defined in \cite{TCT}. Strongly abelian congruences of left quasigroups are those below the Cayley kernel \cite{covering_paper}, hence the property (B) holds for some classes of left quasigroup:
\begin{itemize}
\item[(i)] Malt'sev left quasigroups (indeed they omit strongly abelian congruences \cite{TCT}).
\item[(ii)] Superconnected idempotent left quasigroups (indeed they are faithful and so they are all their factors \cite{Super}). %In particular, if $Q$ is a superconnected idempotent left quasigroup then $\mathcal{O}_{\dis_\alpha}=\mathcal{O}_{\dis^\alpha}=\alpha=\c{\dis_\alpha}=\c{\dis^\alpha}$ for every $\alpha\in Con(Q)$ \cite{nilpotentleft}.
\end{itemize}

%In particular, if $Q$ is superconnected and idempotent then $\mathcal{O}_{\dis_\alpha}=\mathcal{O}_{\dis^\alpha}=\alpha=\c{\dis^\alpha}=\c{\dis^\alpha}$ for every $\alpha\in Con(Q)$.

%Therefore a Malt'sev (resp. superconnected idempotent) left quasigroup $Q$ is \comment{\what} if and only if $\dis^*=\dis_*$ on $Q$. 

%
%
%\begin{corollary}\label{CDSg3}
%%Let $Q$ be a left quasigroup. Then $Q$ has CDSg if and only if the following two conditions hold: 
%%\begin{itemize}
%%\item[(i)] $\dis_\alpha=\dis^\alpha$ for every $\alpha\in Con(Q)$.
%%%\item[(ii)] If $\alpha\in Con(Q)$ and $\alpha\leq \lambda_Q$ then $\alpha=0_Q$.
%%\item[(ii)] If $\alpha,\beta\in Con(Q)$, $\alpha\leq\beta$ and $\beta/\alpha\leq \lambda_{Q/\alpha}$ then $\alpha=\beta$.
%%\end{itemize}
%%In particular, $Q$ is connected by $\dis(Q)$.
%Let $Q$ be a Malt'sev (resp. idempotent superconnected) left quasigroup. Then $Q$ has the CDSg property if and only if $\dis_\alpha=\dis^\alpha$ for every $\alpha\in Con(Q)$.
%%\item[(ii)] If $\alpha\in Con(Q)$ and $\alpha\leq \lambda_Q$ then $\alpha=0_Q$.
%%\item[(ii)] If $\alpha,\beta\in Con(Q)$, $\alpha\leq\beta$ and $\beta/\alpha\leq \lambda_{Q/\alpha}$ then $\alpha=\beta$.
%%\end{itemize}
%\end{corollary}

\begin{corollary}\label{CDSg3}
%Let $Q$ be a left quasigroup. Then $Q$ has CDSg if and only if the following two conditions hold: 
%\begin{itemize}
%\item[(i)] $\dis_\alpha=\dis^\alpha$ for every $\alpha\in Con(Q)$.
%%\item[(ii)] If $\alpha\in Con(Q)$ and $\alpha\leq \lambda_Q$ then $\alpha=0_Q$.
%\item[(ii)] If $\alpha,\beta\in Con(Q)$, $\alpha\leq\beta$ and $\beta/\alpha\leq \lambda_{Q/\alpha}$ then $\alpha=\beta$.
%\end{itemize}
%In particular, $Q$ is connected by $\dis(Q)$.
Let $Q$ be a Malt'sev (resp. idempotent superconnected) left quasigroup. The following are equivalent:
\begin{itemize}
\item[(i)] $Q$ has CDOs.
%\item[(ii)] $Q$ is \comment{\what}.
\item[(ii)] $\Q$ has CDSg.
\end{itemize}
%
%n $Q$ has the CDSg property if and only if $\dis_\alpha=\dis^\alpha$ for every $\alpha\in Con(Q)$.
%%\item[(ii)] If $\alpha\in Con(Q)$ and $\alpha\leq \lambda_Q$ then $\alpha=0_Q$.
%%\item[(ii)] If $\alpha,\beta\in Con(Q)$, $\alpha\leq\beta$ and $\beta/\alpha\leq \lambda_{Q/\alpha}$ then $\alpha=\beta$.
%%\end{itemize}
\end{corollary}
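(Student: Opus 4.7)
The plan is to reduce the corollary to Theorem \ref{CDSg vs CDOs}, which asserts that $Q$ has CDSg if and only if $Q$ is \what\ and has CDOs. Given this equivalence, the implication (ii) $\Rightarrow$ (i) is immediate since CDSg implies CDOs by that theorem. The substantive direction is (i) $\Rightarrow$ (ii), and here the entire content is to verify that the \what\ property comes for free in both classes; once this is known, combining it with the standing hypothesis of CDOs delivers CDSg through Theorem \ref{CDSg vs CDOs}(ii) $\Rightarrow$ (i).

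The sharpness of both classes is precisely what is packaged in the bullet list immediately preceding the corollary. For a Malt'sev $Q$, the variety generated by $Q$ omits strongly abelian congruences by the standard commutator-theoretic fact from \cite{TCT}; via the universal-algebraic interpretation of \what\ recorded after Corollary \ref{staB_under_H}, this means $Q$ is \what. For an idempotent superconnected $Q$, the result cited from \cite{Super} gives that $Q$ and all its homomorphic images are faithful, i.e.\ $\lambda_{Q/\alpha}=0_{Q/\alpha}$ for every $\alpha\in Con(Q)$. By Remark \ref{remark on lambda}(ii) this rewrites as $\c{\dis^\alpha}=\alpha$ for every $\alpha$, so Lemma \ref{lemma O=con} yields $\mathcal{O}_{*}=\c{*}$ on $Q$, and in particular $Q$ is \what.

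No genuine obstacle is expected: the statement is a direct application of Theorem \ref{CDSg vs CDOs} combined with the two \what-preservation facts cited just above it, and I anticipate the proof in the paper to be a one- or two-sentence invocation of these results.
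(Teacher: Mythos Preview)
Your proposal is correct and matches the paper's approach exactly: the corollary carries no explicit proof in the paper because it is immediate from Theorem \ref{CDSg vs CDOs} together with the bullet list just above it asserting that Malt'sev and idempotent superconnected left quasigroups are \what. Your justification of the superconnected case via Lemma \ref{lemma O=con} is a slight elaboration of what the paper leaves implicit, but the logical route is the same.
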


The following example shows that having CDOs and having CDSg are dinstinct properties in general.
\begin{example}\label{example 8,1}
The quandle $Q=${\tt SmallQuandle(8,1)} in the RIG database of GAP \cite{GAP4} has CDOs, but it has not CDSg (indeed $Q$ is not \what).
\end{example}

\section{Nilpotency and CDSg property}

\subsection{Commutator theory and central extensions}

We recall the basics of commutator theory for the reader's convenience. The commutator theory for arbitrary algebraic structures have been developed in \cite{Smith, comm}, including the definition of commutator of congruences and the related notions of center, solvability and nilpotency. We will present all the definitions just for left quasigroups.

Let $(Q,\cdot, \backslash)$ be a left quasigroup and $\alpha,\beta,\delta\in Con(Q)$. We say that \emph{$\alpha$ centralizes $\beta$ over $\delta$}, and write $C(\alpha,\beta;\delta)$, if for every $(n+1)$-ary term operation $t$, every pair $x\,\alpha\,y$ and every $z_1\,\beta\,u_1$, $\dots$, $z_n\,\beta\,u_n$ we have
\[  t^Q(x,z_1,\dots,z_n)\,\delta \, t^Q(x,u_1,\dots,u_n)\quad\text{implies}\quad t^Q(y,z_1,\dots,z_n) \, \delta \, t^Q(y,u_1,\dots,u_n). \]%We will also need the following observations:
%\begin{itemize}
%	\item[(C1)] if $C(\alpha,\beta;\delta_i)$ for every $i\in I$, then $C(\alpha,\beta;\bigwedge\delta_i)$,
%	\item[(C2)] $C(\alpha,\beta;\alpha\wedge\beta)$,
%	\item[(C3)] if $\theta\leq\alpha\wedge\beta\wedge\delta$, then $C(\alpha,\beta;\delta)$ in $A$ if and only if $C(\alpha/\theta,\beta/\theta;\delta/\theta)$ in $A/\theta$.
%\end{itemize}

We denote by $[\alpha,\beta]$ the \emph{commutator} of $\alpha,\beta\in Con(Q)$, that is the smallest congruence $\delta$ such that $C(\alpha,\beta;\delta)$. 
%(the definition makes sense thanks to (C1)). From (C2) follows that $[\alpha,\beta]\leq\alpha\wedge\beta$.
A congruence $\alpha$ is called:
\begin{itemize}
	\item \emph{abelian} if $C(\alpha,\alpha;0_Q)$, i.e., if $[\alpha,\alpha]=0_Q$,
	\item \emph{central} if $C(\alpha,1_Q;0_Q)$, i.e., if $[\alpha,1_Q]=0_Q$.
\end{itemize}
The largest congruence of $Q$ that centralizes $1_Q$ is called the \emph{center} of $Q$ and denoted by $\zeta_Q$. A left quasigroup $Q$ is called \emph{abelian} if $\zeta_Q=1_Q$, or, equivalently, if the congruence $1_Q$ is abelian. We can define a series of congruence of $Q$ that plays the same role as the series of centers for groups. The definition is the following:
$$\zeta_1(Q)=\zeta_Q,\qquad \zeta_{n+1}(Q)/\zeta_{n}(Q)=\zeta_{(Q/\zeta_{n}(Q))}$$
for every $n\in \mathbb{N}$. The left quasigroup $Q$ is called \emph{nilpotent} of length $n$ if $\zeta_n(Q)=1_Q$.
%
%We can use a special type of commutators to define the following series of congruences: 
%\begin{align*}
%    \gamma_{0}&=1_A,\qquad \gamma_{i+1}=[\gamma_{i},1_A],\\
%    \gamma^{0}&=1_A,\qquad \gamma^{i+1}=[\gamma^{i},\gamma^{i}].
%\end{align*}
%% nilpotence uses commutators $[\alpha,1_A]$, solvability uses commutators $[\alpha,\alpha]$. 
%%Similarly to group theory, one can define the series
%%
%%
%%and prove that an algebraic structure $A$ is nilpotent (resp. solvable) of length $n$ if and only if $\gamma_{n}=0_A$ (resp. $\gamma^n=0_A$). 
%
%The algebraic structure $A$ is called \emph{nilpotent} (resp. \emph{solvable}) of length $n$ if and only if $\gamma_{n}=0_A$ (resp. $\gamma^n=0_A$
%%
%%In groups, the commutator and the corresponding notions of abelianness and centrality coincide with the classical terminology. 
%%In loops, the situation is more complicated \cite{SV1}. In a wider setting, the commutator behaves well in all congruence-modular varieties \cite{FM}; for example, it is commutative (note that its definition is asymmetric with respect to $\alpha,\beta$). For racks, the commutator in general lacks many desired properties, such as commutativity (Proposition \ref{p:comm_comm} and Example \ref{ex:non-commutative commutator}), nevertheless, the notions of abelianness and centrality seem to have a very good meaning.

The theory of commutators have been adapted to the setting of racks and quandles in \cite{CP}. The main results can also be partially applied to the setting of left quasigroups.

\begin{lemma}\cite[Lemma 5.1]{CP}\label{from CP}
Let $(Q,\cdot, \backslash)$ be a left quasigroup, $\alpha,\beta\in Con(Q)$. If $C(\alpha,\beta;0_Q)$ holds then $[\dis_{\alpha},\dis_{\beta}]=1$ and $\left(\dis_\beta\right)_x=\left(\dis_\beta\right)_y$  whenever $x\, \alpha\, y$.
\end{lemma}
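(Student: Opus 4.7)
The plan is to represent every element of $\dis_\alpha$ and $\dis_\beta$ as a polynomial operation whose diagonal substitutions trivialize, and then apply the term condition $C(\alpha,\beta;0_Q)$ directly. The key structural fact is that every $g\in\dis_\alpha$ admits a representation $g(w)=P(w,\bar a,\bar b)$, where $\bar a=(a_1,\ldots,a_n)$, $\bar b=(b_1,\ldots,b_n)$ satisfy $a_i\,\alpha\,b_i$, and $P$ is a polynomial operation on $Q$ with $P(w,\bar u,\bar u)=w$ for all $w,\bar u\in Q$. This holds because each generator $kL_{a_i}L_{b_i}^{-1}k^{-1}$ collapses to the identity whenever $b_i=a_i$, and the property survives composition and inversion (when the same tuple $(\bar a,\bar b)$ is used for $g$ and $g^{-1}$). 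Likewise, every $h\in\dis_\beta$ can be written $h(w)=Q(w,\bar c,\bar d)$ with $c_i\,\beta\,d_i$ and $Q(w,\bar u,\bar u)=w$.

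I will prove the stabilizer equality first. Given $x\,\alpha\,y$ and $h\in(\dis_\beta)_x$, I write $h(w)=Q(w,\bar c,\bar d)$ and observe that $Q(x,\bar c,\bar c)=x=Q(x,\bar c,\bar d)$, the second equality being $h(x)=x$. Applying $C(\alpha,\beta;0_Q)$ to the polynomial $t(w,\bar v):=Q(w,\bar c,\bar v)$, with $w$ in the $\alpha$-slot (values $x,y$) and $\bar v$ in the $\beta$-slots (values $\bar c,\bar d$), yields $Q(y,\bar c,\bar c)=Q(y,\bar c,\bar d)$, i.e., $y=h(y)$. Hence $h\in(\dis_\beta)_y$; the reverse inclusion is symmetric.

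For the commutator statement, fix $g\in\dis_\alpha$, $h\in\dis_\beta$, and $z\in Q$, and write $[g,h](z)=R(z,\bar a,\bar b,\bar c,\bar d)$, where $R$ is the composite polynomial built from the representations of $g,h,g^{-1},h^{-1}$. Then $R$ satisfies two diagonal identities: $R(z,\bar u,\bar u,\bar v_1,\bar v_2)=z$ (setting $\bar b=\bar a$ forces $g=g^{-1}=\mathrm{id}$, so $[\mathrm{id},h]=\mathrm{id}$), and $R(z,\bar u_1,\bar u_2,\bar v,\bar v)=z$ (setting $\bar d=\bar c$ makes $h=\mathrm{id}$). Defining $\bar b^{(j)}=(b_1,\ldots,b_j,a_{j+1},\ldots,a_n)$, I will prove $R(z,\bar a,\bar b^{(j)},\bar c,\bar d)=z$ by induction on $j$; the base case $j=0$ is the first diagonal identity. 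For the step $j\mapsto j+1$, I apply $C(\alpha,\beta;0_Q)$ to $t(u,\bar v):=R(z,\bar a,(b_1,\ldots,b_j,u,a_{j+2},\ldots,a_n),\bar c,\bar v)$, with $u$ playing the $\alpha$-role and $\bar v$ the $\beta$-role. The equalities $t(a_{j+1},\bar c)=z$ (second diagonal identity) and $t(a_{j+1},\bar d)=z$ (induction hypothesis) force $t(b_{j+1},\bar c)=t(b_{j+1},\bar d)$; since $t(b_{j+1},\bar c)=z$ once more by the second diagonal identity, the right-hand side equals $z$ too. At $j=n$ this gives $[g,h](z)=z$, and since $g,h,z$ were arbitrary, $[\dis_\alpha,\dis_\beta]=1$.

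The main obstacle is the careful bookkeeping needed to write a generic $g\in\dis_\alpha$ as a polynomial with the diagonal-trivial property: one must track how every occurrence of the symbols $a_i,b_i$ sits inside the expanded $\lmlt(Q)$-conjugates and verify that each such occurrence collapses under the diagonal substitution. With this structural lemma in hand, the remainder is a sequence of mechanical applications of the term condition.
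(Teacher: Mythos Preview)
The paper does not give its own proof of this lemma; it simply cites \cite[Lemma~5.1]{CP}. Your argument is correct and follows the same standard route as that reference: elements of $\dis_\alpha$ and $\dis_\beta$ are expressed as term operations that collapse to the identity on the diagonal, and the term condition $C(\alpha,\beta;0_Q)$ is then applied directly, the commutator case being handled by the one-coordinate-at-a-time induction you describe. The only bookkeeping point worth making explicit is that the conjugators $h\in\lmlt(Q)$ contribute extra parameters $\bar z$ to your representation $P(w,\bar a,\bar b)$; since each such parameter is paired with itself and hence lies on the diagonal of $\beta$, the passage from terms to polynomials is harmless and the term condition still applies verbatim.
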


\begin{corollary}\label{corollary from CP}
Let $(Q,\cdot, \backslash)$ be a left quasigroup and $\alpha\leq \zeta_Q$. Then $\dis_\alpha$ is central in $\dis(Q)$ and $\dis(Q)_x=\dis(Q)_y$ whenever $x\, \alpha\, y$.
\end{corollary}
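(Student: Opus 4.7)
The plan is to derive the corollary as an immediate specialization of Lemma \ref{from CP} to the case $\beta = 1_Q$.

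First, I would unpack the hypothesis $\alpha \leq \zeta_Q$. By definition of the center, $\zeta_Q$ is the largest congruence satisfying $C(\zeta_Q, 1_Q; 0_Q)$. A standard monotonicity property of the centralizing relation (it is monotone decreasing in the first two arguments and monotone increasing in the third) then yields $C(\alpha, 1_Q; 0_Q)$ for any $\alpha \leq \zeta_Q$.

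Next, I would apply Lemma \ref{from CP} with $\beta = 1_Q$. Since $\dis_{1_Q} = \dis(Q)$ by definition, the first conclusion of Lemma \ref{from CP} gives $[\dis_\alpha, \dis(Q)] = 1$, which is exactly the statement that $\dis_\alpha$ is central in $\dis(Q)$. The second conclusion gives $(\dis_{1_Q})_x = (\dis_{1_Q})_y$ whenever $x\,\alpha\, y$, i.e., $\dis(Q)_x = \dis(Q)_y$.

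There is essentially no obstacle here; the content of the corollary is just the specialization $\beta = 1_Q$ of the previous lemma, together with the trivial observation that $C(\zeta_Q,1_Q;0_Q)$ restricts to $C(\alpha,1_Q;0_Q)$ along $\alpha \leq \zeta_Q$. The only thing worth stating explicitly is the monotonicity of the centralizing relation, which is a basic fact of commutator theory and does not require any left-quasigroup-specific argument.
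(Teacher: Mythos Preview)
Your proposal is correct and is exactly the intended derivation: the paper states the result as an immediate corollary of Lemma~\ref{from CP} without proof, and specializing that lemma to $\beta = 1_Q$ together with the monotonicity of the centralizing relation is precisely how one obtains it.
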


%
%\begin{corollary}\label{corollary for abelian}\cite{nilpotentleft} \comment{Needed?}
%Abelian idempotent left quasigroups are quandles.
%\end{corollary}
%
%%\subsection{Central extension}

Let us present a standard contruction of left quasigroups involving abelian groups. Later in the section we are proving that such construction is related to central congruences. Let $(Q,\cdot, \backslash)$ be a left quasigroup, $A$ an abelian group, $f\in \aut{A}$, $g\in \End{A}$ and $\theta:Q\times Q\longrightarrow A$ be a map. We can define the left quasigroup $E=(Q\times A, \cdot,\backslash)$ where
\begin{equation}\label{central ext}
(x,a)\cdot(y,b)=(x\cdot y, g(a)+f(b)+\theta(x,y)).
\end{equation}
We denote such a structure by $\aff(Q,A,g,f,\theta)$ and we say that $E$ is a {\it central extension} of $Q$ by $A$.  

Moreover, the map
$$p_1:E\longrightarrow Q,\quad (x,a)\mapsto x$$
is a morphism of left quasigroups. 

If $|Q|=1$, we can identify $Q\times A$ with $A$ and \eqref{central ext} reads
$$a\cdot b=g(a)+f(b)+c$$
for some $c\in A$. For this special case we use the notation $E=\aff(A,g,f,c)$ and we say that $E$ is an {\it affine left quasigroup} over $A$.

\begin{lemma}\label{on terms of central ext}
Let $(Q,\cdot, \backslash)$ be a left quasigroup, $A$ an abelian group and $E=\aff(Q,A,g,f,\theta)$. Then every term operations of $E$ has the form $$t^E((x_1,a_1),\ldots,(x_n,a_n))=(t^Q(x_1,\ldots x_n),\sum_{j=1}^n G_j(a_j)+\Theta(x_1,\ldots x_n))$$
where $G_j\in \End{A}$ for every $j=1,	\ldots n$ and $\Theta:Q^{n}\longrightarrow A$.
\end{lemma}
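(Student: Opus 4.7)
The plan is to induct on the complexity of the term $t$. The base case is when $t$ is a single variable $x_i$; then $t^E((x_1,a_1),\ldots,(x_n,a_n)) = (x_i, a_i)$, which fits the desired form by setting $G_i = \mathrm{id}_A$, $G_j = 0$ for $j\neq i$ and $\Theta \equiv 0$.

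For the inductive step we must handle the two operations of the language. Suppose $t = t_1 \cdot t_2$ and that each $t_i^E$ already has the claimed form, say with endomorphisms $G_j^{(i)}$ and offset $\Theta_i$. Applying the definition \eqref{central ext} directly, the second coordinate of $t^E((x_1,a_1),\ldots,(x_n,a_n))$ becomes
\[
g\Bigl(\sum_{j} G_j^{(1)}(a_j) + \Theta_1(x_1,\ldots,x_n)\Bigr) + f\Bigl(\sum_{j} G_j^{(2)}(a_j) + \Theta_2(x_1,\ldots,x_n)\Bigr) + \theta\bigl(t_1^Q(\vec x),t_2^Q(\vec x)\bigr),
\]
which we regroup as $\sum_j (gG_j^{(1)} + fG_j^{(2)})(a_j) + g\Theta_1(\vec x) + f\Theta_2(\vec x) + \theta(t_1^Q(\vec x), t_2^Q(\vec x))$. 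Since $g, f \in \End{A}$ and $\End{A}$ is a ring, the new coefficients $gG_j^{(1)} + fG_j^{(2)}$ are again endomorphisms, while the remaining summands depend only on $\vec x$ and define the new $\Theta$.

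For the case $t = t_1 \backslash t_2$, I first compute the form of $L_{(x,a)}^{-1}$. From \eqref{central ext}, solving $(x,a)\cdot(y,b) = (z,c)$ gives $y = x\backslash z$ and $b = f^{-1}(c) - f^{-1}g(a) - f^{-1}\theta(x, x\backslash z)$, so
\[
(x,a)\backslash(z,c) = \bigl(x\backslash z,\ -f^{-1}g(a) + f^{-1}(c) - f^{-1}\theta(x,x\backslash z)\bigr).
\]
Since $f \in \Aut{A}$ we have $f^{-1} \in \End{A}$, hence $-f^{-1}g \in \End A$ as well. Substituting the inductive forms of $t_1^E, t_2^E$ into this identity and regrouping yields an expression of the desired shape, with new coefficients $-f^{-1}g\, G_j^{(1)} + f^{-1} G_j^{(2)} \in \End A$ and a new $\Theta$ depending only on $\vec x$.

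No step is really an obstacle; the only thing to be careful about is that $\backslash$ is correctly unfolded and that the automorphism hypothesis on $f$ is used precisely to guarantee that $f^{-1}$ exists as an endomorphism, making the coefficients in the inductive step of $t_1\backslash t_2$ legitimate elements of $\End A$.
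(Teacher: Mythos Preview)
Your proof is correct and follows the same induction on term structure as the paper. You are in fact slightly more thorough: the paper's inductive step only displays the case $t = s_1 \cdot s_2$, while you also handle $t = s_1 \backslash s_2$ explicitly by computing $L_{(x,a)}^{-1}$ and using that $f^{-1}\in\End{A}$.
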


\begin{proof}
Let us prove the statement by induction on the number of occurrences of variables. If the term $t$ is just a variable there is nothing to prove. Let us denote $\overline{x}=(x_1,\ldots,x_n)$ and $\overline{(x,a)}=((x_1,a_1),\ldots, (x_n,a_n))$. Then
$$t^E(\overline{(x,a)})=L_{s_1^E(\overline{(x,a)})} s_2^E(\overline{(x,a)})$$
for suitable subterms $s_1$ and $s_2$. Then by induction we have that
$$s_i^E(\overline{(x,a)})=(s_i^Q(\overline{x}), \sum_{j} G_{i,j}(a_j)+\Theta_{i}(\overline{x}))$$ for $i=1,2$. Therefore
\begin{align*}
 L_{s_1^E(\overline{(x,a)})} s_2^E(\overline{(x,a)})&= (L_{s_1^Q(\overline{x})} s_2^Q(\overline{x})), g\left(\sum_j G_{1,j}(a_j)+\Theta_{1}(\overline{x})\right)+f\left(\sum_j G_{2,j}(a_j)+\Theta_{2}(\overline{x})\right)+\theta(s_1^Q(\overline{x}),s_2^Q(\overline{x})))\\
&=(t^Q(\overline{x}), \sum_j \underbrace{(gG_{1,j}+f G_{2,j})}_{\in \End{A}}(a_j)	+\underbrace{g(\Theta_{1}(\overline{x}))+f(\Theta_{2}(\overline{x}))+\theta(s_1^Q(\overline{x}),s_2^Q(\overline{x}))}_{=\Theta'(\overline{x})})
%\\
%&=(t^Q(\overline{x}), \sum_j G_j'(a_j))+\Theta'(\overline{x}))
\end{align*}
and so the statement follows.
%where $G'_j=gG_{1,j}+f G_{2,j}\in \End{A}$ for every $j=1,\ldots n$ and $\Theta'(\overline{x})=f(\Theta_{2}(\overline{x}))+g(\Theta_{1}(\overline{x}))+\theta(s_1(\overline{x}),s_2(\overline{x})))$.
%$$t=L_{s_1((x_1,a_1),\ldots,(x_n,a_n))} s_2((x_1,a_1),\ldots, (x_n,a_n))$$
%for suitable subterms $s_1$ and $s_2$. Then by induction we have that
%$$s_i((x_1,a_1),\ldots,(x_n,s_n))=(s_i(x_1,\ldots,x_n), \sum_{j} G_{i,j}(a_1,\ldots,a_n)+\Theta_{i}(x_1,	\ldots x_n))$$ and therefore
%\begin{align*}
%& L_{s_1((x_1,a_1),\ldots,(x_n,a_n))} s_2((x_1,a_1),\ldots, (x_n,a_n))=\\&= (L_{s_1}(x_1,\ldots,x_n) s_2(x_1,	\ldots x_n)), g(\sum_j G_{1,j}(a_j)+\Theta_{1}(x_1,	\ldots x_n))+f(\sum_j G_{2,j}(a_j))+\Theta_{2}(x_1,	\ldots x_n))+\theta(s_1(x_1,\ldots x_n),s_2(x_1,\ldots x_n)))\\
%&=(L_{s_1}(x_1,\ldots,x_n) s_2(x_1,	\ldots x_n)), \sum_j (gG_{1,j}+f G_{2,j})(a_j))+f(\Theta_{2}(x_1,	\ldots x_n))+g\sum_j G_{1,j}(a_j)+\Theta_{1}(x_1,	\ldots x_n)+\theta(s_1(x_1,\ldots x_n),s_2(x_1,\ldots x_n)))
%\end{align*}
%where $gG_{1,j}+f G_{2,j}\in \End{A}$ for every $j=1,\ldots N$.
\end{proof}

\begin{corollary}\label{ker_p_1}
Let $(Q,\cdot, \backslash)$ be a left quasigroup and $E=\aff(Q,A,g,f,\theta)$. Then $\ker{p_1}$ is a central congruence of $E$.
\end{corollary}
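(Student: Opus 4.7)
The goal is to verify the centralizer condition $C(\ker p_1, 1_E; 0_E)$ directly from the definition, using Lemma \ref{on terms of central ext} to control the shape of arbitrary term operations on $E$.

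The plan is as follows. Fix an $(n+1)$-ary term $t$, a pair $(x,a)\,\ker p_1\,(x,a')$ (so the first coordinates agree), and arbitrary elements $(z_i,c_i),(u_i,d_i)\in E$ for $i=1,\dots,n$. I need to show that
\[
t^E((x,a),(z_1,c_1),\dots,(z_n,c_n)) = t^E((x,a),(u_1,d_1),\dots,(u_n,d_n))
\]
implies the same equality with $(x,a)$ replaced by $(x,a')$. By Lemma \ref{on terms of central ext}, there exist endomorphisms $G_0,G_1,\dots,G_n\in\End{A}$ and a map $\Theta:Q^{n+1}\longrightarrow A$, depending only on $t$, such that
\[
t^E((x,e),(y_1,b_1),\dots,(y_n,b_n)) = \bigl(t^Q(x,y_1,\dots,y_n),\, G_0(e)+\textstyle\sum_{j=1}^n G_j(b_j)+\Theta(x,y_1,\dots,y_n)\bigr)
\]
for every $e,b_j\in A$ and $x,y_j\in Q$.

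Observe now that the first coordinate of $t^E$ above depends only on $x$ and the $y_j$'s, not on the $A$-part; so the equality of first coordinates on the two sides is unaffected when we swap $a$ for $a'$. For the second coordinate, the contribution $G_0(a)$ appears as a common additive summand on both sides of the hypothesized equation, and replacing $a$ by $a'$ replaces $G_0(a)$ by $G_0(a')$ on both sides simultaneously. Since $A$ is an abelian group, cancelling $G_0(a)$ on both sides of the original equation (and then adding $G_0(a')$ to both sides) yields the desired equality for $(x,a')$. This shows $C(\ker p_1,1_E;0_E)$, i.e.\ that $\ker p_1\leq \zeta_E$, so $\ker p_1$ is a central congruence of $E$.

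There is no serious obstacle; the only thing worth double-checking is that Lemma \ref{on terms of central ext} genuinely gives \emph{the same} endomorphism $G_0$ acting on the distinguished coordinate on both sides of the equation (not two different ones depending on the remaining inputs), but this is immediate since $G_0,\dots,G_n$ are determined by the term $t$ alone, independently of the argument tuple. The additive cancellation in $A$ is what makes the central shift $G_0(a)\mapsto G_0(a')$ harmless, which is the essential algebraic content of the argument.
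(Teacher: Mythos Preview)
Your proof is correct and follows essentially the same approach as the paper's: both apply Lemma~\ref{on terms of central ext} to write $t^E$ in the form $(t^Q(\cdot),\,G_0(a)+\sum_j G_j(b_j)+\Theta(\cdot))$, observe that the first coordinate is independent of the $A$-components, and then cancel the common summand $G_0(a)$ in the second coordinate to replace it by $G_0(a')$. Your remark that $G_0,\dots,G_n$ depend only on the term $t$ (not on the argument tuple) is exactly the point that makes the cancellation valid, and the paper uses it implicitly in the same way.
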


\begin{proof}
According to Lemma \ref{on terms of central ext}, we have
$$t^E((x,a),(y_1,b_1)\ldots,(y_{n-1},b_{n-1}))=(t^Q(x,y_1,\ldots, y_{n-1}),G(a)+\sum_j G_j(b_j)+\Theta(x,y_1,\ldots y_{n-1}))$$
where $G,G_j\in \End{A}$ and $\Theta:Q^{n}\longrightarrow A$. If the equality $$t^E((x,a),(y_1,b_1)\ldots,(y_{n-1},b_{n-1}))=t^E((x,a),(z_1,c_1)\ldots,(z_{n-1},c_{n-1}))$$ holds then 
\begin{align*}
t^Q(x,y_1,\ldots, y_{n-1})&=t^Q(x,z_1,\ldots, z_{n-1}),\\
\sum_j G_j(b_j)+\Theta(x,y_1,\ldots,y_{n-1})&=\sum_j G_j(c_j)+\Theta(x,z_1,	\ldots,z_{n-1}).
\end{align*}

Hence we also have 
\begin{align*}
t^E((x,d),(y_1,b_1)\ldots (y_{n-1},b_{n-1}))&=(t^Q(x,y_1,\ldots, y_{n-1}),G(d)+\sum_j G_j(b_j)+\Theta(x,y_1,\ldots y_{n-1}))\\
&=(t^Q(x,z_1,\ldots, z_{n-1}),G(d)+\sum_j G_j(c_j)+\Theta(x,z_1,\ldots z_{n-1}))\\&=t^E((x,d),(z_1,c_1),\ldots (z_{n-1},c_{n-1}))
\end{align*} for every $d\in A$.
\end{proof}

According to Corollary \ref{corollary from CP} $\dis_{\ker{p_1}}\leq Z(\dis(E))$ since $\ker{p_1}$ is a central congruence by Corollary \ref{ker_p_1}. Thus, if $h=w L_{(z,d)}^k$ for $w\in \dis(Q)$ and $k\in \mathbb{Z}$ we have that the action of the generators of $\dis_{\ker{p_1}}$ is given by
\begin{align}\label{dis for central ext}
h L_{(x,a)}L_{(x,b)}^{-1}h^{-1}(y,c)=L_{(z,d)}^k L_{(x,a)}L_{(x,b)}^{-1}L_{(z,d)}^{-k}(y,c)=(y,c+f^kg(a-b))
\end{align}
for every $x,y\in Q$ and every $a,b,c\in A$.

Let $N\leq A$. We define the relation $\alpha_N$ by setting 
\begin{equation}\label{alpha_n}
(x,a)\, \alpha_N\, (y,b)\, \text{ if and only if } \, x=y\, \text{ and }\, a-b\in  N
\end{equation}
for every $(x,a),(y,b)\in Q\times A$.
%
%\begin{align}\label{dis for central ext}
%L_{(x,a)}L_{(y,b)}^{-1}(z,c)=(L_x L_y^{-1}(z),c+g(a-b)+\theta_{x,y\backslash z}+\theta(y,y\backslash z))
%\end{align}
%for every $x,y\in Q$ and every $a,b,c\in A$.

\begin{lemma}\label{lemma quandle central ext}
Let $(Q,\cdot, \backslash)$ be a left quasigroup, $A$ an abelian group, $E=\aff(Q,A,g,f,\theta)$ and $N\leq A$. If $g(N)\leq N=f(N)$, the relation $\alpha_N$
% defined by setting $$(x,a)\, \alpha_N\, (y,b)\, \text{ if and only if } \, x=y\, \text{ and }\, a-b\in  N$$
 is a congruence of $E$.
\end{lemma}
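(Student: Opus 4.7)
The plan is to show first that $\alpha_N$ is an equivalence relation, then verify the compatibility with the two operations $\cdot$ and $\backslash$. That $\alpha_N$ is an equivalence relation is immediate from the fact that $N$ is a subgroup of $A$: reflexivity comes from $0 \in N$, symmetry from $N = -N$, and transitivity from $N + N \subseteq N$.

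For compatibility with $\cdot$, I would take $(x,a)\,\alpha_N\,(x,a')$ and $(y,b)\,\alpha_N\,(y,b')$, so that $a - a', b - b' \in N$. Using the defining formula \eqref{central ext}, the difference of the second components of $(x,a)(y,b)$ and $(x,a')(y,b')$ is $g(a - a') + f(b - b')$, which lies in $g(N) + f(N) \subseteq N + N = N$ by the hypothesis $g(N) \leq N = f(N)$. The first components agree because both equal $xy$ in $Q$, so the products are related by $\alpha_N$.

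For compatibility with $\backslash$, I would first derive an explicit formula for the left division. Solving $(x,a) \cdot (u,c) = (y,b)$ gives $u = x \backslash y$ and, since $f \in \aut{A}$,
\[
(x,a) \backslash (y,b) = \bigl(x \backslash y,\, f^{-1}(b - g(a) - \theta(x, x\backslash y))\bigr).
\]
Applying this to two $\alpha_N$-related pairs, the difference of the resulting second components is $f^{-1}\bigl((b - b') - g(a - a')\bigr)$. The element inside $f^{-1}$ belongs to $N - g(N) \subseteq N$, and since $f(N) = N$ with $f$ an automorphism one has $f^{-1}(N) = N$, so the difference lies in $N$. Hence $(x,a)\backslash(y,b) \,\alpha_N\, (x,a')\backslash(y,b')$.

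There is no real obstacle here; the only subtle point is the use of $f^{-1}(N) = N$ in the $\backslash$-step, which requires $f$ to be an automorphism (not merely $f(N) \subseteq N$), and this is precisely the hypothesis assumed in the construction of $\aff(Q,A,g,f,\theta)$. No analogous invertibility is needed for $g$, which is why the weaker condition $g(N) \leq N$ suffices for that map.
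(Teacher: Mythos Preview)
Your proof is correct and more elementary than the paper's. You verify the congruence axioms directly by computing the difference of second components for both operations. The paper instead splits the verification into two parts: first it shows $\dis_{\alpha_N}\leq \dis^{\alpha_N}$ by invoking formula~\eqref{dis for central ext} (itself relying on the centrality of $\dis_{\ker{p_1}}$ from Corollary~\ref{corollary from CP}), which handles compatibility in the left argument; then it checks that each $L_{(y,c)}^{\pm1}$ preserves the $\alpha_N$-blocks (essentially your right-argument computation), and concludes via the characterization of congruences in \cite[Lemma~1.5]{LTT}. Your route avoids the external citation and the preliminary centrality argument, at the cost of not exhibiting the inclusion $\dis_{\alpha_N}\leq \dis^{\alpha_N}$ explicitly; the paper's route fits its running theme of relating congruences to displacement subgroups.
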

\begin{proof}
Let $a-b\in N$. Then, $f^{k}g(a-b)\in N$ for every $k\in \mathbb{Z}$  and so according to \eqref{dis for central ext} we have
$$h L_{(x,a)} L_{(x,b)}^{-1} h^{-1}(y,c)=(y,c+f^{k}g(a-b))\, \alpha_N\, (y,c)$$
for every $(y,c)\in E$. Therefore $\dis_{\alpha_N}\leq \dis^{\alpha_N}$. Moreover, using \eqref{central ext}
\begin{align*}
(y,c)(x,a)&=(yx,g(c)+f(a)+\theta_{y,x})\,\alpha_N\, (y,c)(x,b)=(yx,g(c)+f(b)+\theta_{y,x}),\\
(y,c)\backslash (x,a)&=(y\backslash x,f^{-1}(a-g(c)-\theta_{y,y\backslash x}))\,\alpha_N\,(y,c)\backslash (x,b)=(y\backslash x,f^{-1}(b-g(c)-\theta_{y,y\backslash x})).
\end{align*}% 
%\begin{align*}
%(x,a)(y,c)&=(xy,g(a)+f(c)+\theta_{x,y})\,\alpha_N\,(x,b)(y,c)=(xy,g(b)+f(c)+\theta_{x,y})\\
%(y,c)(x,a)&=(yx,g(c)+f(a)+\theta_{y,x})\,\alpha_N\, (y,c)(x,b)=(yx,g(c)+f(b)+\theta_{y,x}).\\
%(x,a)\backslash (y,c)&=(x\backslash y,f^{-1}(c-g(a)-\theta_{x,x\backslash y}))\,\alpha_N\,(x,b)\backslash (y,c)=(x\backslash y,f^{-1}(c-g(b)-\theta_{x,x\backslash y}))\\
%(y,c)\backslash (x,a)&=(y\backslash x,f^{-1}(a-g(c)-\theta_{y,y\backslash x}))\,\alpha_N\,(y,c)\backslash (x,b)=(y\backslash x,f^{-1}(b-g(c)-\theta_{y,y\backslash x})).
%\end{align*}
Therefore $\alpha_N$ provides a system of blocks for the action of $\lmlt(E)$ and so $\alpha_N$ is a congruence according to \cite[Lemma 1.5]{LTT}.
%
%According to \cite{Principal}, $x\, \alpha\, y$ if and only if $x-y\in N$ for some $f$-invariant subgroup of $A$. In particular the action of $\dis_\alpha$ coincide with the action of $\dis([0]_\alpha)=\setof{t_{(1-f)(h)}}{h\in N}$, so $[0]_\alpha$ is connected.
\end{proof}
Let us show that the converse of \cite[Lemma 2.3]{nilpotentleft} holds for congruences arising from central extensions.

\begin{lemma}\label{lemma quandle central ext_0}
Let $Q$ be an idempotent left quasigroup, $E=\aff(Q,A,g,f,\theta)$ and $\alpha=\ker{p_1}$. If $\alpha=\mathcal{O}_{\dis_\alpha}$ then the blocks of $\alpha$ are connected.
\end{lemma}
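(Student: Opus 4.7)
The plan is to exploit the explicit description of the action of $\dis_\alpha$ on $E$ recorded in \eqref{dis for central ext}. Every generator $h L_{(x,a)}L_{(x,b)}^{-1}h^{-1}$ of $\dis_\alpha$ fixes the first coordinate and translates the second by $f^{k}g(a-b)$ for some $k\in\mathbb{Z}$ depending on $h$. Consequently the orbits of $\dis_\alpha$ on $E$ have the shape $\{y\}\times(c+S)$, where
$$S=\langle f^{k}g(a-b)\,:\,a,b\in A,\ k\in\mathbb{Z}\rangle=\langle f^{k}g(A)\,:\,k\in\mathbb{Z}\rangle\leq A.$$
Since the $\alpha$-block of $(y,c)$ is exactly $\{y\}\times A$ (by definition of $\alpha=\ker p_1$), the hypothesis $\alpha=\mathcal{O}_{\dis_\alpha}$ forces $c+S=A$ for every $c\in A$, that is, $S=A$.

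Fix $x\in Q$. The idempotency of $Q$ gives $xx=x$ (and hence $x\backslash x=x$), so \eqref{central ext} makes $B_x=\{x\}\times A$ a subalgebra of $E$:
$$(x,a)\cdot(x,b)=(x,g(a)+f(b)+\theta(x,x)),\qquad (x,a)\backslash(x,b)=(x,f^{-1}(b-g(a)-\theta(x,x))),$$
both lying in $B_x$. Identifying $B_x$ with $A$ via $(x,c)\leftrightarrow c$, the restriction of $L_{(x,a)}$ to $B_x$ is the affine map $c\mapsto f(c)+g(a)+\theta(x,x)$. The same computation that produced \eqref{dis for central ext}, now carried out entirely inside $B_x$, yields that for every $a,b,e\in A$ and every $k\in\mathbb{Z}$,
$$L_{(x,e)}^{k}\,L_{(x,a)}L_{(x,b)}^{-1}\,L_{(x,e)}^{-k}\big|_{B_x}\colon c\longmapsto c+f^{k}g(a-b).$$

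Each such element is a product of left multiplications of $B_x$ with exponent-sum zero, so it lies in $\dis(B_x)$. Varying $a,b,e\in A$ and $k\in\mathbb{Z}$ shows that $\dis(B_x)$ contains every translation by an element of $\langle f^{k}g(A)\,:\,k\in\mathbb{Z}\rangle=S$, which by the first paragraph equals $A$. Hence $\dis(B_x)$—and a fortiori $\lmlt(B_x)$—acts transitively on $B_x$, so the block $B_x$ is connected. As $x\in Q$ was arbitrary, every block of $\alpha$ is connected.

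There is no real obstacle: the argument is a transcription of \eqref{dis for central ext} from the ambient algebra to the subalgebra $B_x$. The idempotency hypothesis is used in exactly two places—first to guarantee that $B_x$ is a subalgebra, and second to ensure that the conjugators $L_{(x,e)}^{k}$ used to produce the translations by $f^{k}g(a-b)$ can be taken from within $\lmlt(B_x)$ itself, so that all the relevant translations actually belong to $\dis(B_x)$ and not merely to $\dis(E)\cap\sym(B_x)$.
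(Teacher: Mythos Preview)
Your proof is correct and follows essentially the same approach as the paper. Both arguments use the explicit formula \eqref{dis for central ext} to identify the $\dis_\alpha$-orbit of $(y,c)$ as $\{y\}\times(c+S)$ with $S=\langle f^k g(A):k\in\mathbb{Z}\rangle$, and then observe that specializing the conjugator to $L_{(x,e)}^k$ (i.e.\ setting $x=y=z$ in \eqref{dis for central ext}) places the very same translations inside $\dis(B_x)$; the hypothesis $\alpha=\mathcal{O}_{\dis_\alpha}$ then forces transitivity on each block. Your write-up is simply more explicit about the intermediate steps.
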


\begin{proof}
The blocks of $\alpha$ are subalgebras of $E$, since $Q$ is idempotent. The group $\dis_\alpha$ is generated by $\setof{hL_{(x,a)}L_{(x,b)}^{-1}h^{-1}}{x\in Q,\, a,b\in A}$.

According to \eqref{dis for central ext} the orbits of $(y,c)$ with respect to the action of $\dis_\alpha$ is $(y,c+H)$ where $H$ is the subgroup generated by $\setof{f^kg(A)}{k\in \mathbb{Z}}$ and it coincides with the orbit with respect to the action of $\dis([x]_\alpha)$ (indeed it is enough to set $x=y=z$ in \eqref{dis for central ext}).
\end{proof}

\subsection{Nilpotent left quasigroups having CDSg}

Note that $E=\aff(Q,A,g,f,\theta)$ is idempotent if and only if $Q$ is idempotent, $g=1-f$ and $\theta_{x,x}=0$. Note that in this case the blocks of $\ker{p_1}$ are affine quandles isomorphic to $\aff(A,1-f,f,0)$.

In this subsection we strengthen \cite[Proposition 3.17]{Principal}, since we extend it to idempotent left quasigroups of arbitrary cardinality. %Let us first focus on central extension of quandles.

\begin{lemma}\label{CDSg for affine}
Let $E=\aff(Q,A,1-f,f,\theta)$ be an idempotent left quasigroup. If $E$ has CDOs then $\aff(A,1-f,f,0)$ is superconnected. 
\end{lemma}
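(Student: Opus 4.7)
The plan is to use the CDOs hypothesis on $E$ to force $(1-f)C = C$ for every $\Z[f,f^{-1}]$-submodule $C \leq A$; this condition is equivalent to the superconnectedness of $B = \aff(A, 1-f, f, 0)$.

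First I would set up the classification of subalgebras of $B$. Since $E$ is idempotent, $g = 1-f$ and $\theta(x,x) = 0$; Corollary \ref{ker_p_1} then says that $\alpha = \ker{p_1}$ is a central congruence whose blocks are subalgebras of $E$ isomorphic to $B$. The translations $a \mapsto a + a_0$ are automorphisms of $B$, so every subalgebra of $B$ is a coset of a subalgebra containing $0$, and such a subalgebra is precisely a $\Z[f, f^{-1}]$-submodule $C \leq A$; the corresponding affine subquandle $\aff(C, 1-f, f, 0)$ has displacement group acting by translations in $(1-f)C$ (using that $f$ is an automorphism commuting with $1-f$), so is connected iff $(1-f)C = C$. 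Hence superconnectedness of $B$ reduces to showing $(1-f)C = C$ for every such $C$.

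Next, for each such $C$ I would invoke Lemma \ref{lemma quandle central ext} to obtain a congruence $\alpha_C \in Con(E)$. By \eqref{dis for central ext} together with the identity $f^k(1-f)C = (1-f)C$, the group $\dis_{\alpha_C}$ acts on $E$ by uniform translations in $(1-f)C$, so $\mathcal{O}_{\dis_{\alpha_C}} = \alpha_{(1-f)C}$. Since $\dis_{\alpha_C} \in \N'(E)$, the CDOs isomorphism yields the key identity $\dis^{\alpha_{(1-f)C}} = \dis_{\alpha_C}$, and applied to $\alpha_C$ itself gives $\mathcal{O}_{\dis^{\alpha_C}} = \alpha_C$; in particular $\dis^{\alpha_C}$ acts transitively on each coset of $C$ inside each block of $\alpha$.

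Finally, to conclude $(1-f)C = C$ I would analyse the per-block shifts induced by elements $h \in \dis^{\alpha_C} \leq \dis^\alpha = \ker{\pi_\alpha}$. Writing $h = \prod_{i=1}^n L_{(x_i, a_i)}^{k_i}$ with $\sum_i k_i = 0$ and block-preserving, a telescoping computation of the $A$-coordinate expresses the induced translation as a sum whose coefficients $f^{s_{i-1}} - f^{s_i} = f^{s_{i-1}}(1 - f^{k_i})$ (with $s_i = k_1 + \cdots + k_i$) all lie in $(1-f)\Z[f, f^{-1}]$; so when $\theta = 0$ the per-block shift lies in $(1-f)A$, and in general the idempotency constraint $\theta(x, x) = 0$ together with the block-preserving condition is used to keep the $\theta$-contributions in $(1-f)A$. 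Combined with the CDOs-transitivity on cosets of $C$ within each block, this forces $C \subseteq (1-f)A$, and the injectivity of $\dis^*$ applied to $\alpha_C \geq \alpha_{(1-f)C}$ coupled with the identity $\dis^{\alpha_{(1-f)C}} = \dis_{\alpha_C}$ then yields $(1-f)C = C$. The principal obstacle is this last step: controlling the contributions of the twist $\theta$ to per-block shifts under the block-preserving and idempotency constraints, and extracting the linear condition $(1-f)C = C$ from the CDOs identities.
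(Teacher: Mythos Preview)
Your reduction has a first gap: subalgebras of $\aff(A,1-f,f,0)$ containing $0$ are closed under $f$, $f^{-1}$ and $1-f$, but need not be closed under addition, so they are not $\Z[f,f^{-1}]$-submodules in general (take $f=-1$). The paper handles this by passing from an arbitrary subquandle $M\ni 0$ to the generated subgroup $N=\langle M\rangle$, which \emph{is} a submodule, and then recovering $M$ at the end via the identity
\[
\dis(M)=\langle (1-f)m:\,m\in M\rangle=(1-f)\langle M\rangle=(1-f)N=\dis(N),
\]
so that once $N$ is shown connected one gets $M\subseteq N=0^{\dis(N)}=0^{\dis(M)}\subseteq M$.

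The more serious gap is the one you flag yourself. There is no reason for the $\theta$-contributions to the per-block shifts of elements of $\dis^{\alpha_C}$ to lie in $(1-f)A$: if $x_1\neq x_2$ with $L_{x_1}=L_{x_2}$ in $Q$, then $L_{(x_1,a)}L_{(x_2,b)}^{-1}\in\dis^{\ker p_1}$ and its shift on the block over $y$ is $(1-f)(a-b)+\theta(x_1,x_2\backslash y)-\theta(x_2,x_2\backslash y)$, and nothing forces that $\theta$-difference into $(1-f)A$. Even granting $C\subseteq(1-f)A$, your final deduction of $(1-f)C=C$ from $\dis^{\alpha_{(1-f)C}}=\dis_{\alpha_C}$ and injectivity of $\dis^*$ is not carried out and does not obviously follow.

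The paper's route avoids this entirely: it never analyses $\dis^{\alpha_C}$. It works only with $\dis_{\alpha_N}$, whose action is given explicitly by \eqref{dis for central ext} and carries no stray $\theta$-terms, and then invokes Lemma~\ref{lemma quandle central ext_0}, which says that the $\dis_{\alpha_N}$-orbit of $(y,c)$ already coincides with the $\dis([y]_{\alpha_N})$-orbit. Combining this with CDOs yields connectedness of the block $N$, and the displacement identity above finishes the argument. The missing idea in your attempt is precisely this: use the equality of $\dis_{\alpha_N}$-orbits and block-displacement orbits (Lemma~\ref{lemma quandle central ext_0}) together with the passage $M\rightsquigarrow\langle M\rangle$, rather than trying to control $\dis^{\alpha_C}$ directly.
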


\begin{proof}
%If $Q$ is simple, then it is finite and latin \cite{Principal}.
Let $a\in A$. The mapping $t_a:x\mapsto x+a$ is an automorphism of $Q'=\aff(A,1-f,f,0)$. So if $M$ is a subquandle of $Q'$, then $M=a+M'\cong M'$ for some subquandle $M'$ containing $0$ and some $a\in A$. So, it is enough to show that the subquandles containing $0$ of $Q'$ are connected. Let $M$ be a subquandle of $Q'$ containing $0$. Let us denote by $N=\langle m, m\in M\rangle\leq A$. Note that $f(m)=0*m\in M$ and $f^{-1}(m)=0\backslash m\in M$ for every $m\in M$. So $(1-f)(N)\leq N=f(N)$. In particular $N$ is a subquandle of $Q'$ and it contains $M$.

According to Lemma \ref{lemma quandle central ext}(ii), the relation $(x,a)\,\alpha_N\, (y,b)$ if and only if $x=y$ and $a-b\in N$ is a congruence of $E$. The quandle $E$ has CDOs, i.e. $\alpha_N=\mathcal{O}_{\dis_{\alpha_N}}$ and by Lemma \ref{lemma quandle central ext_0} the blocks of $\alpha_N$ are connected. In particular $N=[(x,0)]_{\alpha_N}$ is connected. Moreover 
$$\dis(N)=(1-f)(N)=(1-f)(\langle m,\, m\in N\rangle)=\langle(1-f)(m), \, m\in M\rangle=\dis(M).$$
Then we have
$$M\subseteq N=(x,0)^{\dis(N)}=(x,0)^{\dis(M)}\subseteq M,$$
and so $N=M$ and $M$ is connected.
\end{proof}

In particular, Lemma \ref{CDSg for affine} implies that affine quandles having CDOs are superconnected. In the following theorem we characterize nilpotent idempotent left quasigroups having CDSg.% Recall that a left quasigroup $Q$ is {\it semiregular} if $\dis(Q)$ is semiregular on $Q$. 

\begin{theorem}\label{nilpotent CDSg_0}
Let $Q$ be a nilpotent idempotent left quasigroup.  The following are equivalent:
\begin{itemize}

\item[(i)] $Q$ has CDSg.
%

%\item[(ii)] $Q$ is \what and has CDOs.
%\item[(ii)] $Q$ is \what and $\dis_*=\dis^*$ on $Q$.
%satisfies the properties (A) and (B) as in Proposition \ref{CDSg}.
%\item[(ii)] $Q$ has the CDSg property.
\item[(ii)] $Q$ is a semiregular superconnected latin quandle. 
\end{itemize}
\end{theorem}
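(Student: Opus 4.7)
The plan is to obtain $(ii)\Rightarrow(i)$ directly from earlier results in the paper, and to prove $(i)\Rightarrow(ii)$ by induction on the nilpotency length of $Q$.

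For $(ii)\Rightarrow(i)$, Example \ref{example of CDSg}(ii) already records that a semiregular superconnected idempotent left quasigroup is automatically a quandle and has CDSg, so the latin hypothesis plays no role in this direction.

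For $(i)\Rightarrow(ii)$ I induct on the nilpotency length $n$. In the base case $n=1$, the algebra $Q$ is abelian and idempotent, hence is affine of the form $\aff(A,1-f,f,0)$ for some abelian group $A$ and some $f\in\aut A$, by the standard description of abelian idempotent algebras. Since CDSg implies CDOs by Theorem \ref{CDSg vs CDOs}, Lemma \ref{CDSg for affine} applied with a singleton base quandle yields that $\aff(A,1-f,f,0)$ is superconnected. A direct computation shows that $\dis(Q)$ consists of the translations $z\mapsto z+a$ with $a\in(1-f)(A)$, so $\dis(Q)_x=1$ automatically, which gives semiregularity; connectedness forces $1-f$ to be surjective, and superconnectedness applied to the subquandle of fixed points of $f$ (on which every $L_x$ acts trivially) forces $1-f$ to be injective. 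Bijectivity of $1-f$ then makes every right multiplication bijective, so $Q$ is latin.

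For the inductive step, assume $n>1$ and set $\zeta=\zeta_Q$. By Corollary \ref{factor of FG 2} the quotient $Q/\zeta$ inherits CDSg, and since its nilpotency length is $n-1$ the inductive hypothesis makes it a semiregular superconnected latin quandle. The centrality of $\dis_\zeta$ inside $\dis(Q)$ and the constancy of stabilizers across $\zeta$-classes (both from Corollary \ref{corollary from CP}), together with the identity $\zeta=\mathcal{O}_{\dis_\zeta}$ coming from sharpness and CDOs via Theorem \ref{CDSg vs CDOs}, are the ingredients needed to realize $Q$ as a central extension $E=\aff(Q/\zeta,A,1-f,f,\theta)$ whose $\zeta$-classes are isomorphic to the affine quandle $\aff(A,1-f,f,0)$. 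Lemma \ref{CDSg for affine} then shows that each block is superconnected; combined with superconnectedness of $Q/\zeta$, a standard lifting of subquandles through the projection $p_1$ transfers superconnectedness from the fibers and the base to $Q$ itself. The semiregular property lifts because $\pi_\zeta$ has kernel $\dis^\zeta$ acting as translations on each affine fiber, and the latin property lifts because right multiplications are bijective both on $Q/\zeta$ (by induction) and on each affine fiber (since $1-f$ is bijective, as in the base-case analysis).

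The main obstacle is the central-extension representation in the inductive step: for left quasigroups in full generality the correspondence between a central congruence and a central extension in the sense of \eqref{central ext} is not automatic, and one must combine the idempotent hypothesis with the centrality statements of Corollary \ref{corollary from CP} to produce a well-defined cocycle $\theta$ and a uniform abelian group $A$ indexing all $\zeta$-blocks. Once this representation is in hand, Lemma \ref{CDSg for affine} does all of the heavy lifting at the fiber level, and the remaining verifications for superconnectedness, semiregularity, and the latin property reduce to routine transfers through $p_1$.
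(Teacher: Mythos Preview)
Your overall architecture (induction on nilpotency length, quotienting by $\zeta_Q$, using Lemma \ref{CDSg for affine} on the fibres) matches the paper, but there is a genuine ordering problem in the inductive step that leaves a gap you yourself flag as ``the main obstacle''.

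You try to realise $Q$ as a central extension $\aff(Q/\zeta,A,1-f,f,\theta)$ \emph{before} knowing that $Q$ is a quandle, and then extract semiregularity, superconnectedness and the latin property from that explicit description. But the central-extension representation you need is \cite[Proposition 7.9]{CP}, which is proved for racks; for arbitrary idempotent left quasigroups nothing in the paper produces a uniform abelian group $A$ and a cocycle $\theta$ from a central congruence. Your last paragraph concedes this and sketches how one ``must combine'' idempotency with Corollary \ref{corollary from CP}, but no argument is given. The same issue already appears in your base case: the claim that an abelian idempotent left quasigroup is automatically of the form $\aff(A,1-f,f,0)$ is not standard outside the quandle setting.

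The paper sidesteps this entirely by reversing the order: it proves semiregularity \emph{first}, by a short group-theoretic argument that does not use any explicit coordinates. From CDSg one has $\dis_*=\dis^*$ (Theorem \ref{CDSg vs CDOs}); semiregularity of $Q/\zeta_Q$ gives $\dis(Q)_x\leq\dis(Q)_{[x]}=\ker\pi_{\zeta_Q}=\dis^{\zeta_Q}=\dis_{\zeta_Q}\leq Z(\dis(Q))$, the last inclusion by Corollary \ref{corollary from CP}. Since $\dis(Q)$ is transitive, a central point stabiliser is the kernel of the action and hence trivial. Now $Q$ is semiregular, hence a quandle by \cite[Lemma 3.3]{nilpotentleft}, and \emph{only then} does the paper invoke the central-extension description from \cite{CP} to feed Lemma \ref{CDSg for affine}. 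Superconnectedness is then obtained via closure under extensions \cite[Corollary 1.12]{Super}, and the latin property from \cite[Theorem 2.15]{Super}, rather than by the fibre-by-fibre lifting you outline. If you insert this semiregularity argument before the central-extension step, your proof becomes complete and essentially coincides with the paper's.
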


\begin{proof}
%The implication (i) $\Rightarrow$ (ii) is Proposition \ref{CDSg}, the implication (iii) $\Rightarrow$ (i) follows by Example \ref{example of CDSg}. 

(i) $\Rightarrow$ (ii) By Proposition \ref{CDSg vs CDOs} $Q$ has CDOs and $\dis_*=\dis^*$ on $Q$. If $Q$ is abelian, then it is semiregular and so it is a superconnected quandle by \cite[Lemma 3.3]{nilpotentleft} and Lemma \ref{CDSg for affine}. Assume that $Q$ is nilpotent of length $n+1$. The factor $Q/\zeta_Q$ has CDSg according to Lemma \ref{factor of FG 2}. By induction on the nilpotency length, $Q/\zeta_Q$ is a semiregular superconnected quandle. So we have $\dis(Q)_x\leq \BlocS{Q}{x}=\dis^{\zeta_Q}=\dis_{\zeta_Q}\leq Z(\dis(Q))$ (the first equality follows by Lemma \cite[Lemma 3.1]{nilpotentleft} and the relative displacement group of a central congruence is central by Corollary \ref{corollary from CP}). 
The left quasigroup $Q$ is connected by $\dis(Q)$, therefore $\dis(Q)_x$ is normal and then trivial. So $Q$ is a semiregular quandle by \cite[Lemma 3.3]{nilpotentleft}.
%Lemma \ref{semiregular idempotent are quandles}. 
%

In particular $Q$ is connected, faithful and $\zeta_Q=\mathcal{O}_{\dis_{\zeta_Q}}$, so $Q$ is a central extension of $Q/\zeta_Q$ \cite[Proposition 7.9]{CP} and $[x]_{\zeta_Q}\cong \aff(A,1-f,f,0)$. The blocks of $\zeta_Q$ are superconnected by Lemma \ref{CDSg for affine} and accordingly $Q$ is also superconnected (the class of idempotent superconnected left quasigroups is closed under extensions, see \cite[Corollary 1.12]{Super}). 

Finally, superconnected nilpotent quandles are latin \cite[Theorem 2.15]{Super}.
%Given $M$ is a subquandle of $ \aff(A,1-f,f,0)$ we can define $N=\langle m\in M\rangle$ and the congruence $\alpha_N$ as in Lemma \ref{lemma quandle central ext}(ii). Following the same proof of Lemma \ref{CDSg for affine} we get that $M$ is connected. Hence $[x]_{\zeta_Q}$ is superconnected 
%
%Therefore $Q$ is a quandle by Lemma \ref{semiregular idempotent are quandles}. 
%Since $\zeta_Q=\mathcal{O}_{\dis_{\zeta_Q}}$ we can apply \cite[Corollary 7.9]{CP} and we obtain that $Q$ is a central extension of $Q/\zeta_Q$ as defined in \eqref{central ext} and $Q$ is faithful by Proposition \ref{CDSg} since $\lambda_Q$ is a congruence. 
%By induction $Q/\zeta_Q$ is latin and semiregular. Therefore we can conclude by Lemma \ref{quandle central ext}. \comment{NO, CHECK!}
%In particular, $Q$ is faithful and so by Lemma \ref{lemma1} $Q$ is latin since it is also finite.
% and connected, $\gamma_n(Q)$ is central and $Q/\gamma_n$ is latin. According to \cite[Lemma 3.4]{GB} $Q$ is latin.

(ii) $\Rightarrow$ (i) It follows by Example \ref{example of CDSg}. 
\end{proof}

Note that also for idempotent nilpotent left quasigroups the property of having CDOs and having CDSg are different: indeed the quandle in Example \ref{example 8,1} is nilpotent.

\bibliographystyle{amsalpha}
\bibliography{references} 

\end{document}